\newcommand{\dd}{\; \mathrm{d}}
\newcommand\ip[2]{\langle#1,#2\rangle}
\newcommand\nn{{\mathbb N}}
\newcommand\Om{\Omega}
\newcommand\opk[1]{\mathop{\mathrm{#1}}\nolimits}
\newcommand\Harm{\opk{Harm}}
\newcommand\Hom{\opk{Hom}}
\newtheorem{theorem}{Theorem}
{Proposition}
\newtheorem{lemma}
{Lemma}
{Corollary}
\theoremstyle{definition}
{Definition}
\newtheorem{remark}
{Remark}
\newtheorem{example}
{Example}
\title{Complex spherical codes with three inner products 
}
\author{
Hiroshi Nozaki
\& 
Sho Suda \\
\quad \\
	Department of Mathematics Education \\
	Aichi University of Education\\
	1 Hirosawa, Igaya-cho, 
	Kariya, Aichi 448-8542, 
	Japan\\ 
\quad \\
	hnozaki@auecc.aichi-edu.ac.jp 
\& suda@auecc.aichi-edu.ac.jp}
\begin{document}
\maketitle

\renewcommand{\thefootnote}{\fnsymbol{footnote}}
\footnote[0]{2010 Mathematics Subject Classification: 
05C62 
(05B20).
}
\footnote[0]{}

\noindent
\textbf{Key words}: 
complex spherical $s$-code,
$s$-distance set,
tight design, 
extremal set theory, 
graph representation, 
association scheme.

\bigskip

\begin{abstract}
Let $X$ be a finite set in a complex sphere of $d$ dimension. 
Let $D(X)$ be the set of usual inner products of two distinct vectors in $X$. 
A set $X$ is called a complex spherical $s$-code if the cardinality of $D(X)$ is $s$ and 
$D(X)$ contains an imaginary number.  We would like to classify the largest possible $s$-codes for  given dimension $d$.  In this paper, we consider the problem for the case $s=3$.   
Roy and Suda (2014) gave a certain upper bound for the cardinalities of $3$-codes. 
A $3$-code $X$ is said to be tight if $X$ attains the bound.  
We show that there exists no tight $3$-code except for dimensions $1$, $2$.  
Moreover we make an algorithm to classify the largest $3$-codes by 
considering representations of oriented graphs.  
By this algorithm, the largest $3$-codes are classified for dimensions $1$, $2$, $3$ with a current computer. 
\end{abstract}

\bigskip

\noindent
\textbf{Acknowledgments.} 
Hiroshi Nozaki is supported by JSPS KAKENHI Grant Numbers 25800011, 26400003, 16K17569, 17K0515501. 
Sho Suda is supported by JSPS KAKENHI Grant Numbers 15K21075, 26400003, 17K0515501.
The authors thank  an anonymous referee for some useful comments and suggestions.

\section{Introduction}
Let $X$ be a finite set in the $d$-dimensional complex unit
sphere $\Omega(d)$ in $\mathbb{C}^d$. The {\it angle set} $D(X)$
is defined to be 
\[
D(X)=\{\boldsymbol{x}^*\boldsymbol{y} \mid  \boldsymbol{x},\boldsymbol{y} \in X, \boldsymbol{x} \ne \boldsymbol{y} \}, 
\]
where $\boldsymbol{x}^*$ is the transpose conjugate of a column vector $\boldsymbol{x}$. 
A finite set $X$ is a {\it complex spherical $s$-code} 
if $|D(X)|=s$ and $D(X)$ contains an imaginary number. The value $s$ is called the {\it degree} of $X$.  
For $X, X' \subset \Omega(d)$, we say that $X$ is  
{\it isomorphic} to $X'$ if there exists a unitary transformation
from $X$ to $X'$. 
An $s$-code $X\subset \Omega(d)$ is {\it largest} if $X$ has 
the largest possible cardinality in all $s$-codes in $\Omega(d)$.  
One of major problems on $s$-codes 
is to classify the largest $s$-codes for given $s$ and $d$. 

For the real sphere $S^{d-1}$, a similar concept to $s$-codes is well studied \cite{DGS77}.   
A subset $X$ of  $S^{d-1}$ is an {\it $s$-distance set} if $|D(X)|=s$. 
 Delsarte, Goethals, and Seidel \cite{DGS77} gave an upper bound
\[
|X| \leq \binom{d+s-1}{s}+\binom{d+s-2}{s-1}
\]
for an $s$-distance set $X$ in $S^{d-1}$. 
An $s$-distance set $X$ is {\it tight}  if $X$ attains this bound. A tight $s$-distance set
has the structure of a $Q$-polynomial association scheme, and becomes a tight spherical $2s$-design~\cite{DGS77}.
Tight $s$-distance sets have been classified except for $s=2$ \cite{BB,BD79,BMV04,NVX}. 
The largest $1$-distance set in $S^{d-1}$ is the regular simplex. The largest  $s$-distance set in $S^1$ is the regular $(2s+1)$-gon. The largest  $2$-distance set in $S^{d-1}$ has been determined for all $d$ except for $d = (2k+1)^2-3$ with $k \in \mathbb{N}$ \cite{BY13,L97,M09,GYX}. The largest $3$-distance set in $S^{d-1}$ has been determined for $d =3,8,22$ \cite{MN11,Spre}.  The largest spherical $s$-distance set is not known for other $(s,d)$. The classification of largest spherical $s$-distance sets is still open except for $(s,d)=(1,d),(s,2),(2,d\leq 7),(2,23),(3,3)$. 

We have the following upper bound for a $2$-code $X$ in $\Omega(d)$ \cite{RSX,NSudapre2}. 
\[
|X| \leq \begin{cases}
2d+1 & \text{ if $d$ is odd}, \\
2d  & \text{ if $d$ is even}. 
\end{cases}
\]
A $2$-code $X$ is {\it tight} if $X$ attains this bound. For odd $d$ ({\it resp}.\ even $d$), the existence of a tight $2$-code in $\Omega(d)$ is equivalent to that of a doubly regular tournament ({\it resp}.\ skew Hadamard matrix) of order $d$ \cite{NSudapre2}.  
We have the following upper bound for a $3$-code $X$ in $\Omega(d)$ \cite{RSX}. 
\[
|X|\leq
\begin{cases}
4 & \text{ if } d=1, \\
d^2+2d &  \text{ if }d\geq2.
\end{cases} 
\]
 A $3$-code $X$ is {\it tight} if $X$ attains this bound. Roy and Suda \cite{RSX} proved that 
a tight $3$-code has the structure of a commutative non-symmetric association scheme.  
In this paper, we show that there exists no tight 3-code except for $d= 1,2$. 

We use complex representations of oriented graphs 
in order to classify the largest $3$-codes in $\Omega(d)$.  An oriented graph is a directed graph which has no symmetric pair of directed
edges. An oriented graph $G=(V,E)$ is {\it representable in $\Omega(d)$} 
if there exist 
a mapping $\varphi$ from $V$ to $\Omega(d)$,
an imaginary number $\alpha$ with ${\rm Im}(\alpha)>0$, and a real number $\beta$
such that for any $u,v \in V$, 
\[
\varphi(u)^* \varphi(v)=\begin{cases}
\alpha &\text{ if $(u,v) \in E$}, \\
\overline{\alpha} &\text{ if $(v,u) \in E$}, \\
\beta &\text{ otherwise}.
\end{cases}
\]
The image of the map $\varphi$ is called a {\it complex spherical representation} 
of $G$. 
 If two oriented graphs $G$ and $G'$ are not isomorphic, then representations of $G$ and $G'$ are not isomorphic. Let $\boldsymbol{A}$ be the adjacency matrix of $G$. 
 The Gram matrix $\boldsymbol{H}$ of 
a complex spherical representation of $G$ can be expressed by 
\[
\boldsymbol{H}=\boldsymbol{M}+c\sqrt{-1}(\boldsymbol{A}-\boldsymbol{A}^T),
\]
for some real number $c$ and some real matrix $\boldsymbol{M}$. 
Actually $\boldsymbol{M}$ is positive semidefinite.  
The matrix $\boldsymbol{M}$ can be identified with a real spherical representation of a simple graph $G'$ whose adjacency matrix is $\boldsymbol{A}+\boldsymbol{A}^T$. 
The dimension of a real spherical representation is studied in \cite{ES66,R10,NSpre}. 
Results related to real representations are helpful
to determine the dimension of a complex spherical representation. 
In this paper, 
we give an algorithm using only rational arithmetic to classify the largest $3$-codes in $\Omega(d)$.   
By the algorithm, we can classify the largest $3$-codes in $\Omega(d)$ for $d=1,2,3$.

This paper is organized as follows. 
In Section~\ref{sec:Erep},
we collect known results of Euclidean representations of a simple graph. 
In Section~\ref{sec:HM}, we show several results for Hermitian matrices that are used to determine the dimension of complex representation.  
In Section~\ref{sec:Crep}, we consider the dimension of a complex representation of an oriented graph.  
In Section~\ref{sec:algo}, we give an algorithm to classify the largest $3$-codes, and the largest $3$-codes in $\Omega(d)$ are classified for $d=1,2,3$ by computer calculation. 
In Section~\ref{sec:tight}, we show that there exists no tight 3-code except for $d= 1,2$.

\section{Euclidean representations of a simple graph} \label{sec:Erep}
In this section, we give several results for a real representation of a simple graph.  
Let $V$ be a finite set of order $n$, and $E\subset V \times V$. 
Let $G$ be a graph $(V,E)$. 
The {\it adjacency matrix} $\boldsymbol{A}$ of $G$ is the matrix indexed by $V$, with entries 
\[
\boldsymbol{A}_{xy}=
\begin{cases}
1 &\text{if $(x,y) \in E$}, \\
0 &\text{otherwise}.
\end{cases}
\]
Suppose $G$ is simple and $G$ is not a complete graph or a union of isolated vertices. 
Let $\boldsymbol{A}$ be the adjacency matrix of $G$, and $\overline{\boldsymbol{A}}$ that of the complement. 
The matrix $\boldsymbol{M}_c$  is defined to be 
\[
 \boldsymbol{M}_c=c \boldsymbol{A}+\overline{\boldsymbol{A}} 
\]
for a real number $c$ such that $0\leq c < 1$. 
A finite set $X$ in $\mathbb{R}^d$ is a {\it Euclidean representation} or a {\it real representation} of $G$
if the distance matrix of $X$ is $\boldsymbol{M}_c$ of $G$ for some $c$. 
 Let ${\rm Rep}(G)$ be the smallest integer $d$ such that a Euclidean representation of $G$ is in $\mathbb{R}^d$.
\begin{theorem}[\cite{ES66}] \label{thm:ES} 
Let $G$ be a simple graph.  Let $\boldsymbol{M}_c$ and ${\rm Rep}(G)$ be defined as above. 
Then there exists $\xi\in \mathbb{R}$ such that $0\leq \xi<1$ and the following hold. 
\begin{enumerate}
\item $\boldsymbol{M}_\xi$ is the distance matrix  in ${\rm Rep}(G)$ dimension. 
\item For $\xi< c< 1$, $\boldsymbol{M}_c$ is the distance matrix  in $n-1$ dimension, and not in $n-2$ dimension.
\item  For $0\leq c < \xi$, $\boldsymbol{M}_c$ is not a distance matrix in any dimension. 
\end{enumerate}
\end{theorem}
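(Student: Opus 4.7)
The plan is to invoke Schoenberg's classical criterion and then carry out an eigenvalue analysis. Schoenberg's theorem asserts that a symmetric $n \times n$ matrix $D$ with zero diagonal is a (squared) distance matrix realizable in $\R^d$, and in no lower dimension, if and only if the centered matrix
\[
B(D) := -\tfrac{1}{2}\, P D P, \qquad P := I - \tfrac{1}{n}J,
\]
is positive semidefinite of rank exactly $d$, where $P$ is the orthogonal projection onto the hyperplane $v_0^\perp$ perpendicular to the all-ones vector $v_0$. So the strategy is to apply this to $D = M_c$ and track how the spectrum of $B(M_c)$ varies with $c$.

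Applying Schoenberg to $M_c$ reduces to a direct computation. Using $\bar A = J - I - A$ together with $PJ = 0$ and $P^2 = P$, one obtains
\[
B(M_c) = \tfrac{1-c}{2}\, PAP + \tfrac{1}{2}\, P.
\]
This matrix vanishes on $v_0$, and on $v_0^\perp$ it acts as $\tfrac{1-c}{2} A' + \tfrac{1}{2} I$, where $A'$ is the restriction of $PAP$ to $v_0^\perp$, with eigenvalues $\mu_1 \ge \cdots \ge \mu_{n-1}$. Consequently the eigenvalues of $B(M_c)$ on $v_0^\perp$ are $\tfrac{1-c}{2}\mu_i + \tfrac{1}{2}$, and $B(M_c) \succeq 0$ if and only if $\tfrac{1-c}{2}\mu_{n-1} + \tfrac{1}{2} \ge 0$.

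Because $G$ has at least one edge, the trace identity $\mathrm{tr}(PAP) = -2|E|/n < 0$ forces $\mu_{n-1} < 0$ strictly, so the PSD condition becomes $c \ge \xi$, where $\xi := \max\{0, 1 + 1/\mu_{n-1}\} \in [0,1)$. The three statements then follow by rank bookkeeping. For $c < \xi$, $B(M_c)$ has a negative eigenvalue and Schoenberg's criterion fails, giving $(3)$. For $\xi < c < 1$, every eigenvalue $\tfrac{1-c}{2}\mu_i + \tfrac{1}{2}$ on $v_0^\perp$ is strictly positive, so $\mathrm{rank}\,B(M_c) = n-1$, establishing $(2)$. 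For $c = \xi$, exactly the eigenvalues coming from $\mu_{n-1}$ collapse to zero, so the rank drops to $n - 1 - m$, where $m$ is the multiplicity of $\mu_{n-1}$ in $A'$; this value is by definition $\Rep(G)$, proving $(1)$. The one subtle point in the argument is the strict inequality $\xi < 1$, which is precisely where the hypothesis that $G$ is not a set of isolated vertices enters, via the trace identity above; the non-completeness of $G$ is what ensures that $c < 1$ is the genuinely interesting parameter range (so that edges and non-edges represent two distinct distances).
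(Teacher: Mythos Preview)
The paper does not supply a proof of this theorem; it is quoted from Einhorn--Schoenberg \cite{ES66}, so there is no argument in the paper to compare your attempt against. Your Schoenberg-criterion approach is correct and self-contained, with one small gap. From the trace identity you extract only $\mu_{n-1}<0$, and then set $\xi=\max\{0,\,1+1/\mu_{n-1}\}$; but your justification of (1) --- that at $c=\xi$ the eigenvalues attached to $\mu_{n-1}$ collapse to zero and the rank drops to $n-1-m$ --- tacitly assumes $\xi=1+1/\mu_{n-1}$, i.e.\ that the maximum is inactive. If the maximum were active (so $\xi=0$ while $-1<\mu_{n-1}<0$), no eigenvalue would vanish at $c=\xi$ and your rank claim would be false. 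This case is easy to rule out: for any edge $\{i,j\}$ the vector $e_i-e_j$ lies in $v_0^\perp$ and has Rayleigh quotient
\[
\frac{(e_i-e_j)^TA(e_i-e_j)}{\|e_i-e_j\|^2}=\frac{-2}{2}=-1,
\]
so in fact $\mu_{n-1}\le -1$ and $1+1/\mu_{n-1}\in[0,1)$. With that one line added the argument is complete (and your trace identity becomes superfluous, since $\mu_{n-1}\le -1$ already gives $\mu_{n-1}<0$ and $\xi<1$).
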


A Euclidean representation $X$ of $G$ is a {\it minimal representation} if the distance matrix of $X$ is $\boldsymbol{M}_\xi$, where $\xi$ is given in Theorem~\ref{thm:ES}.  
 Roy \cite{R10} determined ${\rm Rep}(G)$ by eigenvalues and eigenspaces of the adjacency matrix of $G$.   
Let $\boldsymbol{j}$ be the all-ones column vector. 
\begin{theorem}[{\cite[Lemmas~4,5,6, Theorem~7]{R10}}] \label{thm:dim}
Let $G$ be a simple graph with adjacency matrix $\boldsymbol{A}$. 
Let $\lambda_i$ be the $i$-th smallest 
distinct eigenvalue of $\boldsymbol{A}$, $m_i$ the multiplicity of $\lambda_i$, and $\mathcal{E}_i$ the eigenspace corresponding to $\lambda_i$. 
Let $\boldsymbol{P}_i$ be the orthogonal projection matrix onto $\mathcal{E}_i$. 
Let $\beta_i$ be the main angle of $\lambda_i$, namely, 
$
\beta_i= \sqrt{ (\boldsymbol{P}_i \cdot \boldsymbol{j})^T (\boldsymbol{P}_i \cdot \boldsymbol{j})/n}
$. Then the following hold:  
\begin{enumerate}
\item If $\beta_1=0$, then $\xi=(\lambda_1+1)/\lambda_1$ and ${\rm Rep}(G)=n-m_1-1$.
\item If $\beta_1 \ne 0$ and $m_1>1$, then $\xi=(\lambda_1+1)/\lambda_1$ and ${\rm Rep}(G)=n-m_1$.
\item If $\beta_2=0$, $m_1=1$, $\lambda_2<-1$,  and 
$
\beta_1^2/(\lambda_2-\lambda_1) = \sum_{i\geq 3} \beta_i^2/(\lambda_i-\lambda_2), 
$ 
then $\xi=(\lambda_2+1)/\lambda_2$ and ${\rm Rep}(G)=n-m_2-2$.
\item If $\beta_2=0$, $m_1=1$, $\lambda_2<-1$, and 
$
\beta_1^2/(\lambda_2-\lambda_1) > \sum_{i\geq 3} \beta_i^2/(\lambda_i-\lambda_2), 
$
then $\xi=(\lambda_2+1)/\lambda_2$ and ${\rm Rep}(G)=n-m_2-1$.
\item Otherwise, we have 
$\xi<(\lambda_1+1)/\lambda_1$, $\xi \ne (\lambda_2+1)/\lambda_2$ and ${\rm Rep}(G)=n-2$. 
\end{enumerate}
\end{theorem}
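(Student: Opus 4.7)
My plan is to apply Schoenberg's characterization of squared Euclidean distance matrices: a symmetric matrix with zero diagonal is the matrix of squared distances of a configuration in $\mathbb{R}^d$ iff $-\tfrac{1}{2}PMP$ is positive semidefinite of rank $d$, where $P = I - \tfrac{1}{n}J$ is the centering projection. Writing $M_c = (c-1)A + J - I$ and using $PJ = 0$ and $P^2 = P$, this reduces the problem to the single matrix
\[
N_c := (1-c)\,PAP + P.
\]
The PSD property of $N_c$ controls $\xi$ and the rank of $N_c$ equals $\Rep(G)$, so what remains is spectral analysis of $N_c$ as $c$ varies over $[0,1)$.

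The key step is a compatible decomposition of $\mathbb{R}^n$ using the eigendata of $A$. For each $i$ set $V_i := \mathcal{E}_i \cap j^\perp$, so that $\dim V_i = m_i - 1$ when $\beta_i \ne 0$ and $\dim V_i = m_i$ otherwise, and let $W$ be the span of $\{P_i j : \beta_i \ne 0\}$; then $\mathbb{R}^n = \bigl(\bigoplus_i V_i\bigr) \oplus W$. Each $V_i$ is invariant under both $P$ (acting as the identity on $V_i$) and $A$ (acting as $\tau_i I$), so $N_c|_{V_i} = ((1-c)\tau_i + 1)\, I$. On $W$, using the orthonormal basis $u_i := P_i j/\|P_i j\|$, the restriction of $A$ becomes $\Lambda := \mathrm{diag}(\tau_i)_{\beta_i \ne 0}$ and the restriction of $P$ becomes $I - \beta\beta^T$ with $\beta = (\beta_i)$ a unit vector. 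Consequently $N_c|_W = (I - \beta\beta^T)\,D\,(I - \beta\beta^T)$ where $D := (1-c)\Lambda + I$.

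With this block picture in hand I would read off $\xi$ and $\Rep(G)$ case by case. On $V_i$ the PSD condition is $c \ge (\tau_i+1)/\tau_i$ whenever $\tau_i<0$ and $V_i \ne \{0\}$, so the binding constraint is $c \ge (\tau_1+1)/\tau_1$ as long as $V_1 \ne \{0\}$, which covers Cases~1 and~2: at $\xi = (\tau_1+1)/\tau_1$ the block $V_1$ collapses, while the remaining $V_i$ and the generically rank-$(k-1)$ block $W$ (where $k = \#\{i : \beta_i \ne 0\}$) supply $n - 1 - \dim V_1$ directions, giving $\Rep(G) = n - m_1 - 1$ or $n - m_1$ respectively. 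When $V_1 = \{0\}$ (forcing $m_1 = 1$ and $\beta_1 \ne 0$) the candidate constraints are the $V_2$-constraint $c \ge (\tau_2+1)/\tau_2$ (binding when $\beta_2 = 0$ and $\tau_2 < -1$) and the $W$-constraint. A direct computation shows that when $D$ is invertible, $N_c|_W \ge 0$ is equivalent to $\beta^T D^{-1}\beta \le 0$, with equality giving $\mathrm{rank}(N_c|_W) = k - 2$ and strict inequality giving $k - 1$. Substituting $c = (\tau_2+1)/\tau_2$ converts $\beta^T D^{-1}\beta$ into $\tau_2 \bigl[ \beta_1^2/(\tau_2 - \tau_1) - \sum_{i \ge 3} \beta_i^2/(\tau_i - \tau_2) \bigr]$, so (using $\tau_2 < 0$) the dichotomy between equality and strict inequality of $\beta_1^2/(\tau_2-\tau_1)$ versus $\sum_{i\ge 3}\beta_i^2/(\tau_i-\tau_2)$ separates Cases~3 and~4 along the stated rank formulas. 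Case~5 is then the residual situation: the binding value of $c$ is a zero of $\beta^T D^{-1}\beta$ lying strictly below $(\tau_1+1)/\tau_1$ and distinct from $(\tau_2+1)/\tau_2$, so the $W$-block loses exactly one direction and all $V_i$ stay at full dimension, yielding $\Rep(G) = n - 2$.

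The main obstacle is the analysis of the $W$-block, specifically the rational function $f(c) := \sum_{\beta_i\ne 0} \beta_i^2/((1-c)\tau_i + 1)$, whose sign and poles govern when $N_c|_W$ becomes PSD and when it picks up a second kernel direction. The delicate point is tracking $f$ between consecutive poles $(\tau_i+1)/\tau_i$, matching the sign conditions to the hypothesis $\tau_2 < -1$ (which is exactly what places the relevant pole inside $[0,1)$), and verifying that the equality versus strict inequality of the main angle condition at $c = (\tau_2+1)/\tau_2$ corresponds to rank drop $2$ versus $1$. Once the behaviour of $f$ is understood, the five cases collapse into mechanical dimension counts on the $V_i$- and $W$-blocks.
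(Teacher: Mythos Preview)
The present paper does not prove this theorem; it is quoted from Roy~\cite{R10} (Lemmas~4--6 and Theorem~7 there), so there is no in-paper proof to compare against. Your sketch follows essentially the same route as Roy's: Schoenberg's criterion reduces everything to the centered matrix $(1-c)PAP+P$, the orthogonal splitting $\mathbb{R}^n=\bigl(\bigoplus_i V_i\bigr)\oplus W$ into non-main and main parts block-diagonalises $N_c$, and the five cases fall out of comparing the $V_i$-thresholds $(\tau_i+1)/\tau_i$ against the zero set of the rational function $f(c)$ that you isolate on the $W$-block.

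One stated step is incorrect as written and should be repaired. The claimed equivalence ``when $D$ is invertible, $N_c|_W\ge 0 \Longleftrightarrow \beta^TD^{-1}\beta\le 0$'' fails in both directions without an extra hypothesis: if $D$ is positive definite then $QDQ\ge 0$ while $\beta^TD^{-1}\beta>0$, and if $D$ has two or more negative diagonal entries then Cauchy interlacing on the hyperplane $\beta^\perp$ forces $QDQ$ to have a negative eigenvalue regardless of the sign of $\beta^TD^{-1}\beta$. The correct version requires that $D$ have \emph{exactly one} negative eigenvalue. This hypothesis is met at $c=(\tau_2+1)/\tau_2$ in Cases~3 and~4 (only $d_1<0$ there), so your applications go through; but you also need it in Case~5 to argue that the $W$-block stays positive semidefinite until $D$ first acquires a negative entry, and that the subsequent zero of $f$ is unique in the relevant interval. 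For that uniqueness, note that $f$ itself is \emph{not} monotone in $c$ (its derivative mixes the signs of the $\tau_i$); the clean fix is the substitution $x=-1/(1-c)$, which turns $f(c)=0$ into $\sum_{\beta_i\ne0}\beta_i^2/(\tau_i-x)=0$, whose left-hand side has strictly positive $x$-derivative and therefore exactly one zero between consecutive main eigenvalues. With that in hand the dimension counts in all five cases become routine.
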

A graph $G$ is of {\it Type~$(i)$} if  $G$ satisfies condition $(i)$ from Theorem~\ref{thm:dim} for $i \in \{1,\ldots,5\}$. 
A Euclidean representation $X$ of $G$ is {\it spherical} if 
$X$ can be on a sphere.  
\begin{theorem}[\cite{NSpre}] \label{thm:spherical}
Let $G$ be a simple graph. Then the
 following hold.
 \begin{enumerate}
 \item If $G$ is of Type~$(1)$, $(2)$, or $(4)$, then the minimal representation of $G$ is spherical.
 \item If $G$ is of Type~$(3)$ or $(5)$, 
then the minimal representation of $G$ is not spherical. 
 \item A representation that satisfies condition $(2)$ from Theorem~$\ref{thm:ES}$ 
 is spherical.  
 \end{enumerate}
\end{theorem}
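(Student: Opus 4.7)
Part $(3)$ is immediate: by Theorem~\ref{thm:ES}$(2)$, for $\xi<c<1$ the representation consists of $n$ affinely independent points in $\R^{n-1}$, and any such configuration lies on a unique circumscribed sphere.

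For parts $(1)$ and $(2)$, my plan is to translate sphericity into a clean spectral criterion. Let $B_\xi=-\tfrac{1}{2}(I-\tfrac{1}{n}jj^T)M_\xi(I-\tfrac{1}{n}jj^T)$ be the centered Gram matrix of the minimal representation. A short direct computation, using the identity $M_\xi j=n\,\mathrm{diag}(B_\xi)+\mathrm{tr}(B_\xi)\,j$, shows that a centered configuration with Gram matrix $B$ lies on a sphere iff $\mathrm{diag}(B)\in\mathrm{range}(B)+\R j$, and here this simplifies to $j\notin M_\xi(j^\perp)$. Decomposing $j^\perp=\bigoplus_i(\mathcal{E}_i\cap j^\perp)\oplus(V_m\cap j^\perp)$, where $V_m=\mathrm{span}\{P_ij:\beta_i\ne 0\}$ is the $M_\xi$-invariant ``main block'' that contains $j$, and observing that $j$ is orthogonal to all other summands, the test further reduces to $j\notin M_\xi(V_m\cap j^\perp)$. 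On $V_m\cap j^\perp$ the rank-one term $Jv$ vanishes, so $M_\xi$ acts as the diagonal operator $D=\mathrm{diag}(d_i)$ with $d_i=(\xi-1)\tau_i-1$ in the orthonormal basis $\{P_ij/(\sqrt n\beta_i)\}_{\beta_i\ne 0}$. A brief linear algebra computation then yields
\[
j\in M_\xi(V_m\cap j^\perp)\iff d_i\ne 0\text{ for all }i\text{ with }\beta_i\ne 0,\ \text{ and }\sum_{\beta_i\ne 0}\beta_i^2/d_i=0,
\]
so the minimal representation is spherical iff one of these two conditions fails.

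I would then apply this criterion type by type, plugging in the explicit value of $\xi$ from Theorem~\ref{thm:dim}. In Type $(2)$, $\xi=(\tau_1+1)/\tau_1$ together with $\beta_1\ne 0$ forces $d_1=0$, so the first clause fails and sphericity holds. In Types $(1)$ and $(4)$ no $d_i$ vanishes on the main block but the sum has a definite sign: in Type $(1)$, the rescaling $d_i=(\tau_i-\tau_1)/\tau_1$ makes every surviving summand $\beta_i^2/(\tau_i-\tau_1)$ strictly positive because $\tau_1$ is the smallest eigenvalue; in Type $(4)$, the strict inequality $\beta_1^2/(\tau_2-\tau_1)>\sum_{i\ge 3}\beta_i^2/(\tau_i-\tau_2)$ together with $\tau_2<-1$ forces the analogous sum to be nonzero. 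In Type $(3)$, replacing ``$>$'' by equality makes the sum exactly zero, so sphericity fails by the criterion.

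The main obstacle is Type $(5)$, where $\xi$ is determined only implicitly as the smallest $c$ for which $B_c$ is PSD. Here my plan is to observe that the binding constraint lives on the main block and to pin down $\xi$ through the auxiliary operator $T=P_\gamma\,\mathrm{diag}(\tau_i)\,P_\gamma$ on $\gamma^\perp\subset V_m$, where $\gamma=(\beta_i)_{\beta_i\ne 0}$ and $P_\gamma=I-\gamma\gamma^T$; its eigenvalues satisfy the secular equation $\sum_{\beta_i\ne 0}\beta_i^2/(\tau_i-\mu)=0$, and $\xi=1+1/\lambda_{\min}(T)$. Substituting back into $d_i$, the sum collapses to $\lambda_{\min}(T)\cdot\sum\beta_i^2/(\tau_i-\lambda_{\min}(T))=0$, so the criterion again forces non-sphericity. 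The delicate step is justifying that for Type~$(5)$ the binding constraint really does come from the main block — this is exactly what ``otherwise,'' together with $\xi<(\tau_1+1)/\tau_1$ and $\xi\ne(\tau_2+1)/\tau_2$, encodes in Theorem~\ref{thm:dim}$(5)$, and it rules out $\mathcal{E}_1\cap j^\perp$ and $\mathcal{E}_2\cap j^\perp$ as the source of the first vanishing eigenvalue.
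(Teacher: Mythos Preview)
The paper does not prove Theorem~\ref{thm:spherical}; it is quoted from \cite{NSpre} with no argument given, so there is no in-paper proof to compare your approach against. What follows is an assessment of your plan on its own merits.

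Your sphericity criterion is correct, though the derivation is compressed. The key point you gloss over is why ``$\text{diag}(B_\xi)\in\text{range}(B_\xi)+\R j$'' is equivalent to ``$j\notin M_\xi(j^\perp)$'': this uses that $M_\xi$ is symmetric, so $\text{range}(M_\xi)=(\ker M_\xi)^\perp$, which forces $j\in\text{range}(M_\xi)$ whenever $\ker M_\xi\subseteq j^\perp$ and makes the two conditions match in all cases. Once that is in hand, your reduction to the main block and the computation $j\in M_\xi(V_m\cap j^\perp)\iff(\forall\,\text{main }i:\ d_i\neq 0)\wedge\sum\beta_i^2/d_i=0$ is clean and correct, and your case checks for Types~(1)--(4) go through exactly as you wrote.

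For Type~(5) your plan is right but the ``delicate step'' is not yet closed. You invoke $\xi<(\tau_1+1)/\tau_1$ and $\xi\neq(\tau_2+1)/\tau_2$ to rule out $\mathcal{E}_1\cap j^\perp$ and $\mathcal{E}_2\cap j^\perp$ as the source of the vanishing eigenvalue, but you do not explain why $\mathcal{E}_i\cap j^\perp$ for $i\geq 3$ cannot be binding. The missing observation is an interlacing bound: since $T$ is the compression of $\mathrm{diag}(\tau_i)_{\text{main}}$ to the hyperplane $\gamma^\perp$, Cauchy interlacing gives $\tau_1<\lambda_{\min}(T)<\tau_{i_2}$ (strict because all $\beta_i\neq 0$ on the main block), where $\tau_{i_2}$ is the second smallest main eigenvalue. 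Hence the main-block constraint $1+1/\lambda_{\min}(T)$ dominates every constraint $(\tau_i+1)/\tau_i$ coming from $\mathcal{E}_i\cap j^\perp$ with $i\geq 2$ and $\tau_i<0$, because $1+1/\tau$ is decreasing on $\tau<0$ and $\lambda_{\min}(T)<\tau_{i_2}\leq\tau_i$. You should also note that $\mathrm{Rep}(G)=n-2$ forces the null space of $B_\xi$ on $j^\perp$ to be one-dimensional, which excludes ties between the main-block constraint and any $\mathcal{E}_i\cap j^\perp$ constraint. With those two sentences added, your Type~(5) argument is complete.
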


A symmetric matrix $\boldsymbol{M}$ is {\it dissimilarity} if each entry in $\boldsymbol{M}$ is non-negative, and each diagonal entry in $\boldsymbol{M}$ is zero.  
The smallest integer $d$ such that a dissimilarity matrix $\boldsymbol{M}$ is the distance matrix of some subset $X$ of $\mathbb{R}^d$ 
is called  
the {\it embedding dimension} of $\boldsymbol{M}$. 
Let $\boldsymbol{P}$ denote the square matrix of order $n$ defined by 
$\boldsymbol{P}=\boldsymbol{I}-(1/n)\boldsymbol{J}$, where $\boldsymbol{I}$ is the identity matrix and $\boldsymbol{J}$ is the all-ones matrix.   

\begin{lemma}[\cite{N81}] \label{rem:rank2}
If $\boldsymbol{M}$ is a dissimilarity matrix, then the following equivalent.  
\begin{enumerate}
\item  $\boldsymbol{M}$ is a distance matrix of embedding dimension $d$. 
\item $-\boldsymbol{P}\boldsymbol{M}\boldsymbol{P}$ is a positive semidefinite matrix of rank $d$. 
\end{enumerate}
\end{lemma}

\begin{lemma}[\cite{N81}] \label{rem:rank}
If $\boldsymbol{M}$ is a dissimilarity matrix, then the following are equivalent.
\begin{enumerate}
\item There uniquely exists $a \in \mathbb{R}$ such that  $a>0$, $-\boldsymbol{M}+a\boldsymbol{J}$ is a positive semidefinite matrix of rank $d$, $-\boldsymbol{M}+a'\boldsymbol{J}$ is a positive semidefinite matrix of rank $d+1$ for $a'>a$, 
and $-\boldsymbol{M}+c\boldsymbol{J}$ is not positive semidefinite for $c<a$.  
\item  $\boldsymbol{M}$ is the distance matrix of a subset of $S^{d-1}$, where $d$ is the embedding dimension of $\boldsymbol{M}$.  
\end{enumerate}
\end{lemma}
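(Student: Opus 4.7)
The plan is to analyze how the rank and positive semidefiniteness of $-M+aJ$ depend on $a$, and to relate a factorization $-M+aJ = YY^T$ to a spherical realization of $M$. Throughout I use the standard fact that the embedding dimension of $M$ equals the affine dimension of any realization of $M$.

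For $(2)\Rightarrow(1)$, I would take a realization $X=\{x_1,\ldots,x_n\}\subset S^{d-1}$ of $M$ and let $G$ be its Gram matrix. A direct computation shows $-M+aJ = \lambda G$ for a specific pair $a,\lambda>0$ determined by the convention used for the distance matrix. Because the embedding dimension is $d$, the affine span of $X$ is $d$-dimensional inside $\R^d$, hence equals $\R^d$ and contains the origin; equivalently $\mathbf{1}\notin\mathrm{Col}(G)$, and $\mathrm{rank}(G)=d$. For the rank jump, I would observe that $\mathrm{rank}(G+cJ)=d+1$ for every $c>0$, precisely because $\mathbf{1}\notin\mathrm{Col}(G)$ forces $\mathrm{Col}(J)=\R\mathbf{1}$ to contribute an extra dimension. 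For $a'<a$, decompose $\mathbf{1}=p+q$ with $p\in\mathrm{Col}(G)$ and $0\neq q\in\mathrm{Null}(G)$; the quadratic form $q^T(-M+a'J)q=-(a-a')(\mathbf{1}^T q)^2<0$ rules out positive semidefiniteness below $a$ and yields uniqueness.

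For $(1)\Rightarrow(2)$, factor $-M+aJ=YY^T$ with $Y$ of size $n\times d$ of full column rank, so the rows $y_1,\ldots,y_n$ of $Y$ satisfy $\|y_i\|^2=a$ and lie on the sphere of radius $\sqrt{a}$ in $\R^d$; after rescaling they realize $M$ on $S^{d-1}$. It remains to verify that $d$ is the embedding dimension, i.e.\ that the affine dimension of $\{y_i\}$ is $d$. Since the linear span has dimension $d$, the affine dimension is either $d$ or $d-1$. In the latter case $\{y_i\}$ lies in an affine hyperplane of $\R^d$ at some distance $c>0$ from the origin; translating orthogonally by $cu$, where $u$ is the unit normal to this hyperplane, produces a configuration whose Gram matrix equals $-M+(a-c^2)J$ and is still positive semidefinite of rank $d-1$, contradicting the uniqueness of $a$ asserted in (1).

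The conceptual core is the equivalence between the rank behavior of $-M+aJ$ as $a$ varies and the position of the origin relative to the affine hull of the realizing points; once this is in hand, both directions reduce to short linear algebra. The main obstacle is the short quadratic-form calculation showing failure of positive semidefiniteness below $a_{\min}$, together with the translation-to-hyperplane argument that converts an embedding-dimension deficit into a contradiction with uniqueness.
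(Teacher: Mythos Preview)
The paper does not prove this lemma at all; it is quoted verbatim from Neumaier \cite{N81} and used as a black box. So there is no ``paper's own proof'' to compare against.

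Your argument is sound and is essentially the standard one. Both directions correctly exploit the dichotomy ``$\mathbf{1}\in\mathrm{Col}(G)$'' versus ``$\mathbf{1}\notin\mathrm{Col}(G)$'', which is exactly the algebraic shadow of whether the origin lies in the affine hull of the realizing points. The rank-jump computation, the quadratic-form test with $q\in\mathrm{Null}(G)$, and the hyperplane-translation trick are all correct as stated. Two small remarks: in $(2)\Rightarrow(1)$ you should say explicitly that $\mathbf{1}^Tq=\|q\|^2>0$ (you use this but only implicitly, via $p\perp q$); and the phrase ``after rescaling they realize $M$ on $S^{d-1}$'' hides the fact that rescaling the points rescales the distance matrix as well, so strictly speaking you obtain a realization of a scalar multiple of $M$ on $S^{d-1}$. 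This is harmless for the way the lemma is used in the paper (where a rescaling of the long distance to $1$ is always permitted), but it is worth making the convention explicit rather than leaving it to ``determined by the convention used''.
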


\section{Results on Hermitian matrices} 
\label{sec:HM}
In this section, we give several results for
Hermitian matrices that are used later.  
Let $\boldsymbol{H}$ be a Hermitian matrix of size $n$.
Let $\lambda$ be an eigenvalue of  $\boldsymbol{H}$. 
Let $\mathcal{E}$ be the eigenspace corresponding to $\lambda$. 
Let $\boldsymbol{P}_\lambda$ be the orthogonal projection matrix onto $\mathcal{E}$. 
Let $\boldsymbol{j}$ be the all-ones column vector. 
The {\it main angle} $\beta$ of $\lambda$ is defined to be
$
\beta=
 \sqrt{(\boldsymbol{P}_\lambda \cdot \boldsymbol{j}) ^\ast (\boldsymbol{P}_\lambda \cdot \boldsymbol{j})/n}.
$
Note that
$\beta=0$ if and only if $\mathcal{E} \subset \boldsymbol{j}^{\perp}$. 
An eigenvalue $\lambda$ is {\it main}  
if $\beta\ne 0$. 
Let $\boldsymbol{J}$ be the all-ones matrix, and   $\boldsymbol{I}$ the identity matrix. 
\begin{theorem}[\cite{NSudapre2}] \label{thm:interlace}
Let $\boldsymbol{H}$ be a Hermitian matrix, 
and $\boldsymbol{M}=\boldsymbol{H}+a \boldsymbol{J}$ for a real number $a$. 
Let $\tau_1,\ldots, \tau_r$ be the distinct main
eigenvalues of $\boldsymbol{H}$ such that $\tau_{1}<\tau_{2}<\cdots<\tau_{r}$. 
Let $\mu_1,\ldots, \mu_s$ be the distinct 
main eigenvalues of $\boldsymbol{M}$ such that $\mu_{1}<\mu_{2}<\cdots<\mu_{s}$.  
Let ${\beta}_{i}$ be the main angle of $\tau_i$.  
Then $r=s$ holds, and 
\begin{equation} \label{eq:f(x)}
\prod_{i=1}^r (\mu_{i}-x)= \prod_{i=1}^r(\tau_{i}-x) 
(1+a \sum_{j=1}^r \frac{n \beta_{j}^2}{\tau_{j}-x}). 
\end{equation} 
 Moreover, if $a>0$, then
$\tau_{1}<\mu_{1}<\tau_{2}< \cdots <\tau_{r}<\mu_{r}$, 
and if $a<0$, then
 $\mu_{1}<\tau_{1}<\mu_{2} <\cdots <\mu_{r}<\tau_{r}$.
\end{theorem}

\begin{lemma} \label{lem:1}
Let $\boldsymbol{H}$ be a Hermitian matrix of size $n$. 
Let $\tau_1,\ldots, \tau_r$ be the distinct main
eigenvalues of $\boldsymbol{H}$ such that $\tau_{1}<\tau_{2}<\cdots<\tau_{r}$. 
Let $\beta_i$ be the main angle of $\tau_i$. 
Let $\boldsymbol{P}$ be the orthogonal projection matrix onto $\boldsymbol{j}^{\perp}$, namely $\boldsymbol{P}=\boldsymbol{I}-(1/n) \boldsymbol{J}$. 
If $\boldsymbol{H}$ is not positive semidefinite,  
then the following are equivalent. 
\begin{enumerate}
\item There exists $a \in \mathbb{R}$ such that $a>0$ and $\boldsymbol{H}+ a \boldsymbol{J}$ is positive semidefinite. 
\item It follows that $\tau_{2}>0$, $\sum_{i=1}^r \beta_{i}^2/\tau_{i} <0$, and $\boldsymbol{P}\boldsymbol{H}\boldsymbol{P}$ is positive semidefinite. 
\end{enumerate}
Moreover, if $(1)$ holds, then $a \geq -1/(\sum_{i=1}^r n\beta_{i}^2/\tau_{i})$ holds.  
\end{lemma} 
 \begin{proof}
Let $\lambda$ be an eigenvalue of $\boldsymbol{H}$ that is not main. 
Let $\boldsymbol{v}$ be a normalized eigenvector corresponding to $\lambda$.
Note that $\boldsymbol{v}$ is orthogonal to the all-ones vector. 

 $(1) \Rightarrow (2)$: 
Since $\boldsymbol{H}+a\boldsymbol{J}$ is positive semidefinite,  
 we have 
$\lambda=\boldsymbol{v}^*\boldsymbol{H}\boldsymbol{v}=\boldsymbol{v}^*\boldsymbol{P}(\boldsymbol{H}+a\boldsymbol{J})\boldsymbol{P}\boldsymbol{v}\geq 0$.
 Since $\boldsymbol{H}$ is not positive semidefinite, 
 we have $\tau_{1}<0$. 
 Let $\mu_1 , \ldots, \mu_r$  
 be the 
 distinct main eigenvalues of $\boldsymbol{H}+a\boldsymbol{J}$ such that $ \mu_{1}<\mu_{2}<\cdots<\mu_{r}$. 
 By Theorem~\ref{thm:interlace},  
 we have $\tau_{1}< \mu_{1}<\tau_{2}$. 
Since $\boldsymbol{H}+a\boldsymbol{J}$ is positive semidefinite, we have $0\leq \mu_1<\tau_2$. 
By  equation \eqref{eq:f(x)} for $x=0$, 
it follows that $\sum_{i=1}^r n \beta_{i}^2/\tau_{i}<0$ and
 $a \geq -1/(\sum_{i=1}^r n \beta_{i}^2/\tau_{i})$.
 In particular,  $\mu_{1}= 0$ if and only if
 $a = -1/(\sum_{i=1}^r n \beta_{i}^2/\tau_{i})>0$.
  Since $\boldsymbol{H}+a\boldsymbol{J}$ is positive semidefinite, so is 
$\boldsymbol{P}(\boldsymbol{H}+a\boldsymbol{J})\boldsymbol{P}=\boldsymbol{P}\boldsymbol{H}\boldsymbol{P}$.

 $(2) \Rightarrow (1)$: 
Since $\boldsymbol{v}$ is orthogonal to the all-ones vector and $\boldsymbol{P}\boldsymbol{H}\boldsymbol{P}$ is positive semidefinite, 
we have 
\begin{align}\label{eq:lem1}
\lambda=\boldsymbol{v}^*\boldsymbol{H}\boldsymbol{v}=\boldsymbol{v}^*\boldsymbol{P}\boldsymbol{H}\boldsymbol{P}\boldsymbol{v}\geq 0.
\end{align}
Since $\boldsymbol{H}$ is not positive semidefinite, 
we have $\tau_1<0$.  
  By equation \eqref{eq:f(x)} for $x=0$ and $\tau_{2}>0$, 
 a matrix $\boldsymbol{H}+ a \boldsymbol{J}$ is positive semidefinite for $a \geq -1/(\sum_{i=1}^rn \beta_{i}^2/\tau_{i})>0$.
 \end{proof}
We can verify the following remarks by the proof of Lemma~\ref{lem:1}. 
\begin{remark} \label{rem:Rank(H)}
If Lemma~\ref{lem:1} (1) holds, then
\begin{enumerate}
 \item ${\rm Rank}(\boldsymbol{H}+ a \boldsymbol{J})={\rm Rank}(\boldsymbol{H})-1$ for $a = -1/(\sum_{i=1}^r n\beta_{i}^2/\tau_{i})$,
 \item ${\rm Rank}(\boldsymbol{H}+ a \boldsymbol{J})={\rm Rank}(\boldsymbol{H})$ for $a > -1/(\sum_{i=1}^r n\beta_{i}^2/\tau_{i})$.
\end{enumerate}
\end{remark}
\begin{remark} \label{rem:null}
If Lemma~\ref{lem:1} (2) holds, then the null space of 
$\boldsymbol{H}$ is contained in $\boldsymbol{j}^{\perp}$. 
\end{remark}

\begin{remark} \label{lem:Rank(PHP)}
If Lemma~\ref{lem:1} $(2)$ holds,  
then ${\rm Rank} (\boldsymbol{H}+a\boldsymbol{J})={\rm Rank}(\boldsymbol{P}\boldsymbol{H}\boldsymbol{P})$ for $a = -1/(\sum_{i=1}^r n\beta_{i}^2/\tau_{i})$. 
\end{remark}


\begin{theorem} \label{thm:E_0}
Let $\boldsymbol{H}$ be a 
Hermitian matrix.  
Let $\boldsymbol{M}$ and $\boldsymbol{A}$ be the real matrices such that $\boldsymbol{H}=\boldsymbol{M}+ \sqrt{-1} \boldsymbol{A}$. 
Let $\mathcal{E}_0$ be the null space of $\sqrt{-1} \boldsymbol{A}$.
Let $\mathcal{E}_0'$ be the null space of $\boldsymbol{M}$. 
If $\boldsymbol{H}$ is positive semidefinite, then $\mathcal{E}_0' \subseteq \mathcal{E}_0$ holds.  
\end{theorem}
\begin{proof}
Since $\boldsymbol{M}$ is a real symmetric matrix, we can take a basis of $\mathcal{E}_0'$ consisting of real vectors. 
For a real vector $\boldsymbol{v} \in \mathcal{E}_0'$, we have
\[
\boldsymbol{v}^* \boldsymbol{H} \boldsymbol{v}= \boldsymbol{v}^* \boldsymbol{M} \boldsymbol{v}+ \sqrt{-1}  \boldsymbol{v}^* \boldsymbol{A} \boldsymbol{v}=0  
\]
because $\boldsymbol{A}$ is skew-symmetric.    
Since $\boldsymbol{H}$ is a positive semidefinite, $\boldsymbol{v}^* \boldsymbol{H} \boldsymbol{v}=0$ if and only if $\boldsymbol{H} \boldsymbol{v}=\boldsymbol{o}$. 
It thus follows that  
\[
\boldsymbol{o}=\boldsymbol{H} \boldsymbol{v}=\boldsymbol{M} \boldsymbol{v}+  \sqrt{-1}  \boldsymbol{A} \boldsymbol{v} =  
\sqrt{-1}   \boldsymbol{A} \boldsymbol{v}. 
\]
Therefore $\mathcal{E}_0' \subseteq \mathcal{E}_0$ holds.
\end{proof}

\begin{theorem} \label{thm:half}
Let $\boldsymbol{H}$ be a 
Hermitian matrix.  
Let $\boldsymbol{M}$ and $\boldsymbol{A}$ be the real matrices such that $\boldsymbol{H}=\boldsymbol{M}+ \sqrt{-1} \boldsymbol{A}$. 
If $\boldsymbol{H}$ is positive semidefinite,  
then $2 {\rm Rank}(\boldsymbol{H}) \geq {\rm Rank}({\boldsymbol{M}})$.  
\end{theorem}
\begin{proof}
By Theorem~\ref{thm:E_0}, we have $\mathcal{E}_0' \subseteq \mathcal{E}_0$. 
Let $\mathcal{E}_+$ ({\it resp}. $\mathcal{E}_-$) be the direct sum of eigenspaces corresponding
to the positive ({\it resp}. negative) eigenvalues of $\sqrt{-1}\boldsymbol{A}$. 
It is easily proved that $\dim \mathcal{E}_+ =\dim \mathcal{E}_-$. 
For a non-zero vector $\boldsymbol{v} \in \mathcal{E}_+ \oplus ((\mathcal{E}_0')^{\perp} \cap \mathcal{E}_0)$, we have
$\boldsymbol{v}^*\boldsymbol{H}\boldsymbol{v}>0$ because $\boldsymbol{M}$ is positive semidefinite.
Therefore,
\begin{align*}
{\rm Rank} (\boldsymbol{H}) &\geq \dim (\mathcal{E}_+ \oplus ((\mathcal{E}_0')^{\perp} \cap \mathcal{E}_0))\\
&= \dim (\mathcal{E}_+) + \dim ((\mathcal{E}_0')^{\perp} \cap \mathcal{E}_0)\\
&= \dim (\mathcal{E}_+) + \dim ((\mathcal{E}_0')^{\perp})+\dim (\mathcal{E}_0)-\dim ((\mathcal{E}_0')^{\perp}+\mathcal{E}_0)\\
& =\frac{1}{2} {\rm Rank} (\boldsymbol{A}) + {\rm Rank}(\boldsymbol{M})+(n- {\rm Rank}(\boldsymbol{A}))-n\\
&={\rm Rank} (\boldsymbol{M}) -\frac{1}{2} {\rm Rank}(\boldsymbol{A}) \\
& \geq {\rm Rank} (\boldsymbol{M}) -\frac{1}{2} {\rm Rank}(\boldsymbol{M}) \\
& =\frac{1}{2}{\rm Rank}(\boldsymbol{M}),
\end{align*}
where $n$ is the size of $\boldsymbol{H}$. 
Thus the theorem follows. 
\end{proof}

\begin{theorem}\label{thm:eta}
Let $\boldsymbol{H}$ be a Hermitian matrix. 
Let $\boldsymbol{M}$ and $\boldsymbol{A}$ be the real matrices such that $\boldsymbol{H}=\boldsymbol{M}+ \sqrt{-1} \boldsymbol{A}$. 
Let $\mathcal{E}_0$ be the null space of $\sqrt{-1} \boldsymbol{A}$. 
Let $\mathcal{E}_0'$ be the null space of $\boldsymbol{M}$. 
Suppose $\boldsymbol{M}$ is positive semidefinite, and 
$\mathcal{E}_0' \subseteq \mathcal{E}_0$ holds. 
Then there uniquely exists $\eta >0$ such that the following hold:
\begin{enumerate}
\item $\boldsymbol{M} +\eta \sqrt{-1}\boldsymbol{A}$ is 
positive semidefinite, 
and its rank is smaller than ${\rm Rank} (\boldsymbol{M})$. 
\item  $\boldsymbol{M} +c \sqrt{-1}\boldsymbol{A}$ is 
positive semidefinite for $0\leq c < \eta$, 
and its rank is equal to ${\rm Rank} (\boldsymbol{M})$. 
\item  $\boldsymbol{M} +c\sqrt{-1} \boldsymbol{A}$ is not
positive semidefinite for $\eta<c$. 
\end{enumerate}
\end{theorem}
\begin{proof}
Let $\Phi(c)$ be the function defined by 
\[
\Phi(c):=\min_{\boldsymbol{v} \in (\mathcal{E}_0')^{\perp}, \boldsymbol{v}^*\boldsymbol{v}=1} \boldsymbol{v}^*(
\boldsymbol{M} +c \sqrt{-1}\boldsymbol{A}) \boldsymbol{v}.
\]
Note that $\Phi(c) \geq 0$ if and only if $\boldsymbol{M} +c \sqrt{-1}\boldsymbol{A}$ is positive semidefinite, and 
${\rm Rank}(\boldsymbol{M} +c \sqrt{-1}\boldsymbol{A}) \leq {\rm Rank} (\boldsymbol{M})$. 
In particular, 
 $\Phi(c) = 0$ if and only if 
${\rm Rank}(\boldsymbol{M} +c \sqrt{-1}\boldsymbol{A}) < {\rm Rank} (\boldsymbol{M})$.
Since $\Phi(c)$ is the minimum value of the collection of linear functions in $c$, 
the function $\Phi(c)$ is concave. 
Since $\boldsymbol{M}$ is positive semidefinite, we have $\Phi(0)>0$.
There exists $\boldsymbol{v} \in (\mathcal{E}_0')^{\perp}$ such that 
$\boldsymbol{v}^* (\sqrt{-1}\boldsymbol{A}) \boldsymbol{v} < 0$.
It therefore follows that  $\lim_{c\rightarrow \infty } \Phi(c)=-\infty$.
By the intermediate value theorem, this theorem follows. 
\end{proof}

\section{Representations of an oriented graph} \label{sec:Crep}

Let $X$ be a complex spherical $3$-code with angle set 
$D(X)= \{\alpha, \overline{\alpha}, \beta\}$, where $\alpha$ is an imaginary number with ${\rm Im}(\alpha)>0$, and $\beta \in \mathbb{R}$.   
Let $E=\{(\boldsymbol{x},\boldsymbol{y}) \in X \times X \mid  \boldsymbol{x}^* \boldsymbol{y} = \alpha  \}$, and $E'=\{(\boldsymbol{x},\boldsymbol{y}) \mid (\boldsymbol{x},\boldsymbol{y}) \in E \text{ or } (\boldsymbol{y},\boldsymbol{x}) \in E \}$. 
 Let $G$ be the oriented graph $(X,E)$ with adjacency matrix $\boldsymbol{A}$.  
Let $G'$ be the simple graph $(X,E')$ with adjacency matrix $\boldsymbol{B}$.  
Let $\overline{\boldsymbol{B}}$ be the adjacency matrix of the complement of $G'$. 
The Gram matrix $\boldsymbol{H}$ of 
a complex spherical representation of $G$ can be expressed by 
\[
\boldsymbol{H}=\boldsymbol{M}+c\sqrt{-1}(\boldsymbol{A}-\boldsymbol{A}^T)
\]
for  a real number $c$ and a real matrix  $\boldsymbol{M}$.  
Let $\phi$ be a map from $\Omega(d)$ 
to $S^{2d-1}$ defined by 
\[
\phi(u_1+v_1 \sqrt{-1}, \ldots , u_d+v_d \sqrt{-1})=(u_1,v_1,\ldots,u_d,
v_d).
\]
Note that $\phi(\boldsymbol{x})^T \phi(\boldsymbol{y})={\rm Re}(\boldsymbol{x}^* \boldsymbol{y})$ for $\boldsymbol{x},\boldsymbol{y} \in \Omega(d)$. 
The matrix $\boldsymbol{M}$ is the Gram matrix of $\phi(X)=\{\phi(\boldsymbol{x})\mid \boldsymbol{x} \in X\}$. The representation $\phi(X)$ of $G'$ is spherical.  By Lemma~\ref{rem:rank}, 
$\boldsymbol{M}$ can be expressed by 
\[\boldsymbol{M}=-(b\boldsymbol{B}+\overline{\boldsymbol{B}})+a\boldsymbol{J}
\]
for $a>0$ and $b\geq 0$. Note that $b\boldsymbol{B}+\overline{\boldsymbol{B}}$ is the distance matrix of $\phi(X)$ after rescaling  the two distances to $1$ and $b$.   
Since $\phi(X)$ is spherical, $\phi(X)$ is the minimal representation of $G'$ of Type~(1), (2)
or (4), or a non-minimal representation by Theorem~\ref{thm:spherical}.

By Theorem~\ref{thm:E_0}, the null space $\mathcal{E}_0'$ of 
$\boldsymbol{M}$ must be contained in the null space $\mathcal{E}_0$ 
of $\sqrt{-1}(\boldsymbol{A}-\boldsymbol{A}^T)$. 
When we consider a minimal-dimensional representation of a given oriented graph $G$, 
 the minimal representation of $G'$ rarely satisfies $\mathcal{E}_0' \subseteq \mathcal{E}_0$.  
We give simple examples: 
\[
G_1: \boldsymbol{A}_1=\left(
\begin{array}{cccc}
 0 & 1 & 0 & 0 \\
 0 & 0 & 1 & 0 \\
 0 & 0 & 0 & 1 \\
 1 & 0 & 0 & 0
\end{array}
\right), 
\qquad
G_2: \boldsymbol{A}_2=
\left(
\begin{array}{cccc}
 0 & 0 & 0 & 0 \\
 1 & 0 & 1 & 0 \\
 0 & 0 & 0 & 1 \\
 1 & 0 & 0 & 0
\end{array}
\right).
\]
Then both $G_1'$ and $G_2'$ are 
the cycle $C_4$. Indeed $C_4$ is of Type~(1), and 
its minimal representation is the vertex set of the square 
in $\mathbb{R}^2$. 
The Gram matrix of the square can be expressed by 
\[
\boldsymbol{M}_1=-(\frac{1}{2}\boldsymbol{B}+\overline{\boldsymbol{B}})+\frac{1}{2}\boldsymbol{J}=
\left(
\begin{array}{cccc}
 \frac{1}{2} & 0 & -\frac{1}{2} & 0 \\
 0 & \frac{1}{2} & 0 & -\frac{1}{2} \\
 -\frac{1}{2} & 0 & \frac{1}{2} & 0 \\
 0 & -\frac{1}{2} & 0 & \frac{1}{2}
\end{array}
\right).
\]
The null space of $\boldsymbol{M}_1$ is ${\rm Span}\{(1,0,1,0),(0,1,0,1)\}$. 
This coincides with the null space of $\sqrt{-1}(\boldsymbol{A}_1-\boldsymbol{A}_1^T)$. 
Actually we can give a minimal-dimensional representation in $\Omega(1)$
of $G_1$
as 
\[
\boldsymbol{H}_1=-(\frac{1}{2}\boldsymbol{B}+\overline{\boldsymbol{B}})+\frac{1}{2}\boldsymbol{J}+\frac{1}{2}\sqrt{-1}(\boldsymbol{A}_1-\boldsymbol{A}_1^T)
=\left(
\begin{array}{cccc}
 \frac{1}{2} & \frac{\sqrt{-1}}{2} & -\frac{1}{2} & -\frac{\sqrt{-1}}{2} \\
 -\frac{\sqrt{-1}}{2} & \frac{1}{2} & \frac{\sqrt{-1}}{2} & -\frac{1}{2} \\
 -\frac{1}{2} & -\frac{\sqrt{-1}}{2} & \frac{1}{2} & \frac{\sqrt{-1}}{2} \\
 \frac{\sqrt{-1}}{2} & -\frac{1}{2} & -\frac{\sqrt{-1}}{2} & \frac{1}{2}
\end{array}
\right).
\]
On the other hand, the eigenvalues of $\sqrt{-1}(\boldsymbol{A}_2-\boldsymbol{A}_2^T)$ are 
$\{-\sqrt{2}, -\sqrt{2}, \sqrt{2}, \sqrt{2}\}$, and hence the null
space is empty. In this case, ${\rm Rank}(\boldsymbol{M}_2)$ must be 4, and we use a non-minimal representation
of $G'$: 
\[
\boldsymbol{M}_2=-(\boldsymbol{B}+\overline{\boldsymbol{B}})+\boldsymbol{J}=\left(
\begin{array}{cccc}
 1 & 0 & 0 & 0 \\
 0 & 1 & 0 & 0 \\
 0 & 0 & 1 & 0 \\
 0 & 0 & 0 & 1
\end{array}
\right). 
\]  
Then we can give a minimal-dimensional representation in $\Omega(2)$ 
of $\boldsymbol{A}_2$ as 
\[
\boldsymbol{H}_2=-(\boldsymbol{B}+\overline{\boldsymbol{B}})+\boldsymbol{J}+\sqrt{\frac{-1}{2}}(\boldsymbol{A}_2-\boldsymbol{A}_2^T)
=\left(
\begin{array}{cccc}
 1 & -\sqrt{\frac{-1}{2}} & 0 & -\sqrt{\frac{-1}{2}} \\
 \sqrt{\frac{-1}{2}} & 1 & \sqrt{\frac{-1}{2}} & 0 \\
 0 & -\sqrt{\frac{-1}{2}} & 1 & \sqrt{\frac{-1}{2}} \\
 \sqrt{\frac{-1}{2}} & 0 & -\sqrt{\frac{-1}{2}} & 1
\end{array}
\right). 
\]


The dimension of a non-minimal representation $X'$ of a simple graph $G'$ is $n-1$, where $n$ is the order of $G'$.   
If $X'$ is used in order to give a representation $X$ of an oriented graph $G$, 
then the dimension $d$ of  $X$ is at least $(n-1)/2$ by Theorem~\ref{thm:half}, namely $n \leq 2d+1$.   
The union of $d$ triangles that are orthogonal to each other is a spherical $3$-code in $\Omega(d)$ and has size $3d$.    
Therefore it is enough to consider a representation $X$ of $G$ obtained from the minimal representation of $G'$ in order to determine the largest $3$-codes.  

We consider the minimal-dimensional 
representation of  $G$ obtained from the minimal 
representation of $G'$.  
Throughout this section, we suppose 
$G'$ has non-zero $\boldsymbol{B}$ and $\overline{\boldsymbol{B}}$, and $G'$ is of Type~(1), (2), or (4).
 Let $\boldsymbol{H}(a,c)$ denote the matrix defined by 
\begin{equation} \label{eq:H(a,c)}
\boldsymbol{H}(a,c)=-(\xi \boldsymbol{B}+ \overline{\boldsymbol{B}})+ a\boldsymbol{J}+c\sqrt{-1}(\boldsymbol{A}-\boldsymbol{A}^T)
\end{equation}
for real numbers $a$ and $c$, where $\xi$ is the positive number given in Theorem~\ref{thm:ES}. 
Note that $\xi \boldsymbol{B}+ \overline{\boldsymbol{B}}$ be the distance matrix of the minimal representation of $G'$.  
We would like to determine $a$ and $c$ so that $a>0$, $c>0$,  
$\boldsymbol{H}(a,c)$ is positive semidefinite, and the rank of $\boldsymbol{H}(a,c)$
is minimal.  
Let $\mathcal{E}_0$ be the null space 
of $\sqrt{-1}(\boldsymbol{A}-\boldsymbol{A}^T)$, and $\mathcal{E}_0'$ be that of $-(\xi \boldsymbol{B}+ \overline{\boldsymbol{B}})$. 
\begin{remark} \label{rem:nullB}
If $G'$ is of Type~(1), (2), or (4), then $\mathcal{E}_0' \subset \boldsymbol{j}^{\perp}$
holds by Lemma~\ref{rem:rank} and Remark~\ref{rem:null}.
\end{remark}

Since  the diagonal entries in $\boldsymbol{H}(0,c)$ are zero,    $\boldsymbol{H}(0,c)$ is not a positive semidefinite.  
If $\boldsymbol{H}(a,c)$ is positive semidefinite, then 
$\boldsymbol{H}(0,c)$ satisfies condition (2) from  Lemma~\ref{lem:1}, and hence
$\boldsymbol{P}\boldsymbol{H}(0,c)\boldsymbol{P}$ is positive semidefinite. 
If $\boldsymbol{H}(0,c)$ satisfies condition (2) from  Lemma~\ref{lem:1},  
then there uniquely exists a positive number $a$ such that ${\rm Rank}(\boldsymbol{H}(a,c))$ is minimal, and ${\rm Rank}(\boldsymbol{H}(a,c))={\rm Rank}(\boldsymbol{P}\boldsymbol{H}(0,c)\boldsymbol{P})$
by Remarks~\ref{rem:Rank(H)} and \ref{lem:Rank(PHP)}. 
Therefore we would like to choose $c$ so that $\boldsymbol{P}\boldsymbol{H}(0,c)\boldsymbol{P}$ is positive
semidefinite, and ${\rm Rank}(\boldsymbol{P}\boldsymbol{H}(0,c)\boldsymbol{P})$ is minimal.  
The following lemma shows such possible $c$ and the 
evaluation of ${\rm Rank}(\boldsymbol{P}\boldsymbol{H}(0,c)\boldsymbol{P})$. 

\begin{lemma}\label{lem:eta2}
Let $G$ be an oriented graph $(V,E)$ with adjacency matrix $\boldsymbol{A}$. 
Let $G'$ be the simple graph $(V, E')$ with adjacency matrix $\boldsymbol{B}$, where $E'=\{(u,v) \mid (u,v) \in E \text{ or } (v,u) \in E \}$.  
Let $\overline{\boldsymbol{B}}$ be the adjacency matrix of the complement of $G'$. 
  Let $\boldsymbol{H}(a,c)$ be the matrix defined by 
\[
\boldsymbol{H}(a,c)=-(\xi \boldsymbol{B}+ \overline{\boldsymbol{B}})+ a\boldsymbol{J}+c\sqrt{-1}(\boldsymbol{A}-\boldsymbol{A}^T)
\]
for real numbers $a$ and $c$, where $\xi$ is the positive number given in Theorem~\ref{thm:ES}. 
Let $\mathcal{E}_0$ be the null space 
of $\sqrt{-1}(\boldsymbol{A}-\boldsymbol{A}^T)$. 
Let $\mathcal{E}_0'$ be the null space of $-(\xi \boldsymbol{B}+ \overline{\boldsymbol{B}})$. 
If $\mathcal{E}_0' \subseteq \mathcal{E}_0$ holds, 
then there uniquely exists a positive number $\eta$ such that
\begin{enumerate}
\item $\boldsymbol{P}\boldsymbol{H}(0,\eta)\boldsymbol{P}$ is positive semidefinite, and 
\[{\rm Rank} (\boldsymbol{P}\boldsymbol{H}(0,\eta)\boldsymbol{P})< {\rm Rank}(\boldsymbol{P}\boldsymbol{H}(0,0)\boldsymbol{P}),\] 
\item $\boldsymbol{P}\boldsymbol{H}(0,c)\boldsymbol{P}$ is positive semidefinite, and 
\[{\rm Rank} (\boldsymbol{P}\boldsymbol{H}(0,c)\boldsymbol{P})={\rm Rank} (\boldsymbol{P}\boldsymbol{H}(0,0)\boldsymbol{P})\]
 for $0< c< \eta$, 
\item $\boldsymbol{P}\boldsymbol{H}(0,c)\boldsymbol{P}$ is not positive semidefinite for $\eta<c$.
\end{enumerate}
\end{lemma}
\begin{proof}
It follows that 
\[
\boldsymbol{P}\boldsymbol{H}(0,c)\boldsymbol{P}=-\boldsymbol{P}(\xi \boldsymbol{B}+ \overline{\boldsymbol{B}})\boldsymbol{P}+c\sqrt{-1}\boldsymbol{P}(\boldsymbol{A}-\boldsymbol{A}^T)\boldsymbol{P}. 
\]
It is easily shown that the null space of $-\boldsymbol{P}(\xi \boldsymbol{B}+ \overline{\boldsymbol{B}})\boldsymbol{P}$
is contained in that of $\sqrt{-1}\boldsymbol{P}(\boldsymbol{A}-\boldsymbol{A}^T)\boldsymbol{P}$. 
This lemma follows from Theorem~\ref{thm:eta}.  
\end{proof}

Next we have to check whether $\boldsymbol{H}(0,c)$ satisfies 
 condition  (2) from Lemma~\ref{lem:1} for $0< c \leq \eta$, where $\eta$ is the positive number given in Lemma~\ref{lem:eta2}. 
If  $\boldsymbol{H}(0,c)$ satisfies 
condition (2) from Lemma~\ref{lem:1}, we can construct a representation of $G$
by choosing suitable number $a$. 
\begin{theorem} \label{thm:3-code} 
Let $G$ be an oriented graph $(V,E)$ with adjacency matrix $\boldsymbol{A}$. 
Let $G'$ be the simple graph $(V, E')$ with adjacency matrix $\boldsymbol{B}$, where $E'=\{(u,v) \mid (u,v) \in E \text{ or } (v,u) \in E \}$. 
Suppose $G'$ is of Type (1), (2), or (4).  
Let $\overline{\boldsymbol{B}}$ be the adjacency matrix of the complement of $G'$. 
  Let $\boldsymbol{H}(a,c)$ be the matrix defined by 
\[
\boldsymbol{H}(a,c)=-(\xi \boldsymbol{B}+ \overline{\boldsymbol{B}})+ a\boldsymbol{J}+c\sqrt{-1}(\boldsymbol{A}-\boldsymbol{A}^T)
\]
for real numbers $a$ and $c$, where $\xi$ is the positive number given in Theorem~\ref{thm:ES}.
Let \[U=\{(a,c) \mid \text{$\boldsymbol{H}(a,c)$ is positive semidefinite, $a>0$, $c>0$} \},\] 
and 
\[{\rm Rep}(G)=\min\{{\rm Rank}(\boldsymbol{H}(a,c))  \mid (a,c) \in U \}.\]  
Let ${\rm Rep}(G')$ be the dimension of the minimal representation of $G'$. 
Let $\mathcal{E}_0$ be the null space 
of $\sqrt{-1}(\boldsymbol{A}-\boldsymbol{A}^T)$. 
Let $\mathcal{E}_0'$ be the null space of $-(\xi \boldsymbol{B}+ \overline{\boldsymbol{B}})$. 
Let $\eta$ be a positive number given in Lemma~{\rm \ref{lem:eta2}}. 
If $\mathcal{E}_0' \subseteq \mathcal{E}_0$ holds,   
then the following hold. 
\begin{enumerate}
\item If $\boldsymbol{H}(0,\eta)$ satisfies condition $(1)$ from Lemma~{\rm \ref{lem:1}}, then  
\[{\rm Rep}(G)={\rm Rank}(\boldsymbol{H}(0,\eta))-1 <{\rm Rep}(G').\]
\item If $\boldsymbol{H}(0,\eta)$ does not satisfy condition $(1)$ from Lemma~{\rm \ref{lem:1}}, then  
\[{\rm Rep}(G)={\rm Rank}(\boldsymbol{H}(0,0))-1={\rm Rep}(G').\]
\end{enumerate}
\end{theorem}

\begin{proof}
Since the minimal representation of $G'$ is spherical, 
there uniquely exists $a' \in \mathbb{R}$ 
such that 
$\boldsymbol{H}(a',0)$ is positive semidefinite and
${\rm Rep}(G')={\rm Rank}(\boldsymbol{H}(a',0))$ by Lemma~\ref{rem:rank}. By Remark~\ref{lem:Rank(PHP)}, 
it follows that ${\rm Rank}(\boldsymbol{H}(a',0))=
{\rm Rank}(\boldsymbol{P}\boldsymbol{H}(0,0)\boldsymbol{P})$, and hence 
\begin{equation} \label{eq:pr-1}
{\rm Rep}(G')=
{\rm Rank}(\boldsymbol{P}\boldsymbol{H}(0,0)\boldsymbol{P}).
\end{equation}

Since $\boldsymbol{H}(a,c)$ is positive semidefinite
for each $(a,c) \in U$, 
the matrix $\boldsymbol{P}\boldsymbol{H}(0,c)\boldsymbol{P}$, which is equal to $\boldsymbol{P}\boldsymbol{H}(a,c)\boldsymbol{P}$, is positive semidefinite. 
Since $\boldsymbol{P}\boldsymbol{H}(0,c)\boldsymbol{P}$
is positive semidefinite and $\mathcal{E}_0' \subseteq \mathcal{E}_0$, it follows that $0<c\leq \eta$,  
\begin{equation} \label{eq:pr-2}
{\rm  Rank}(\boldsymbol{P}\boldsymbol{H}(0,c)\boldsymbol{P}) = {\rm  Rank}(\boldsymbol{P}\boldsymbol{H}(0,0)\boldsymbol{P}) 
\end{equation}
for $0<c<\eta$, and  
\begin{equation} \label{eq:pr-2_2}
{\rm  Rank}(\boldsymbol{P}\boldsymbol{H}(0,\eta)\boldsymbol{P}) < {\rm  Rank}(\boldsymbol{P}\boldsymbol{H}(0,0)\boldsymbol{P})  
\end{equation}
for $c=\eta$ by Lemma~\ref{lem:eta2}. 

If $\boldsymbol{H}(a,c)$ is positive semidefinite, 
then there uniquely exists $a_c \in \mathbb{R}$ 
such that
$\boldsymbol{H}(a_c,c)$ is positive semidefinite and
\begin{equation} \label{eq:pr-3}
{\rm Rank}(\boldsymbol{P}\boldsymbol{H}(0,c)\boldsymbol{P})=
{\rm Rank}(\boldsymbol{H}(a_c,c))=
{\rm Rank}(\boldsymbol{H}(0,c))-1
 \leq {\rm Rank}(\boldsymbol{H}(a,c))
\end{equation}
 by Remark~\ref{rem:Rank(H)} and Remark~\ref{lem:Rank(PHP)}. 

(1):  Since $\boldsymbol{H}(0,\eta)$ satisfies condition $(1)$ from Lemma~{\rm \ref{lem:1}}, 
there exists $a \in \mathbb{R}$ such that $(a,\eta) \in U$.  
From equations \eqref{eq:pr-2},  \eqref{eq:pr-2_2} and \eqref{eq:pr-3}, 
for each $(a,c) \in U$ with $c\ne \eta$, 
\begin{align}
{\rm Rank}(\boldsymbol{H}(0,\eta))-1&={\rm Rank}(\boldsymbol{H}(a_\eta,\eta))
={\rm Rank}(\boldsymbol{P}\boldsymbol{H}(0,\eta)\boldsymbol{P}) \nonumber \\
&<{\rm Rank}(\boldsymbol{P}\boldsymbol{H}(0,0)\boldsymbol{P})=
{\rm Rank}(\boldsymbol{P}\boldsymbol{H}(0,c)\boldsymbol{P}) \nonumber \\
&={\rm Rank}(\boldsymbol{H}(a_c,c)) 
\leq {\rm Rank}(\boldsymbol{H}(a,c)). \label{eq:pr-4}
\end{align}
For $(a,\eta) \in U$, 
\begin{equation}
{\rm Rank}(\boldsymbol{H}(0,\eta))-1={\rm Rank}(\boldsymbol{H}(a_\eta,\eta))
\leq {\rm Rank}(\boldsymbol{H}(a,\eta))  \label{eq:pr-5}
\end{equation}
by equation \eqref{eq:pr-3}. 
The assertion follows form equations \eqref{eq:pr-1}, \eqref{eq:pr-4}, and \eqref{eq:pr-5}.

(2): 
Since the minimal representation of $G'$ is spherical, 
there exists $a' \in \mathbb{R}$ such that $\boldsymbol{H}(a',0)$ is 
positive semidefinite. 
Since $\mathcal{E}_0' \subset \boldsymbol{j}^{\perp}$ by Remark~\ref{rem:nullB}, the null space of 
$\boldsymbol{H}(a',0)$ is also $\mathcal{E}_0'$.  
By Theorem~\ref{thm:eta}, there exists 
a positive number $\eta'$ such that $0<\eta'<\eta$ and $\boldsymbol{H}(a',\eta')$ is positive semidefinite.
For each $(a,c) \in U$, it follows from equations \eqref{eq:pr-2} and \eqref{eq:pr-3} that 
\begin{multline}\label{eq:pr-6}
{\rm Rank}(\boldsymbol{H}(a_{\eta'},\eta'))={\rm Rank}(\boldsymbol{P}\boldsymbol{H}(0,\eta')\boldsymbol{P})
={\rm Rank}(\boldsymbol{P}\boldsymbol{H}(0,0)\boldsymbol{P})\\
={\rm Rank}(\boldsymbol{P}\boldsymbol{H}(0,c)\boldsymbol{P}) \leq{\rm Rank}(\boldsymbol{H}(a,c)).
\end{multline}
It follows from Lemma~\ref{rem:rank2} and Remark \ref{rem:Rank(H)} that 
\begin{equation} \label{eq:pr-7}
{\rm Rank}(\boldsymbol{P}\boldsymbol{H}(0,0)\boldsymbol{P})={\rm Rank}(\boldsymbol{H}(0,0))-1. 
\end{equation}
The assertion follows from equations \eqref{eq:pr-1}, \eqref{eq:pr-6}, and \eqref{eq:pr-7}.  
\end{proof}

\section{Algorithm to give the largest $3$-codes} \label{sec:algo}
In this section, we give an {\it algorithm} using only rational arithmetic to classify the largest $3$-codes in 
$\Omega(d)$ for given dimension $d$. First we collect several algorithms used in the {\it algorithm}.  
An interval $[a,b]$ is an {\it isolating interval} for a polynomial $f$ and a real number $\gamma$ such that $f(\gamma)=0$ 
if $a$ and $b$ are rational numbers, $a<\gamma<b$,  and $[a,b]$ contains no other roots of $f$.  
A real algebraic number $\gamma$ is represented by a pair $(f_\gamma,I)$, where $f_\gamma$ is the minimal polynomial 
of $\gamma$ over the field of rationals, and $I$ is an isolating interval $[a,b]$ for $f$ and $\gamma$.   
If $f$ is the minimal polynomial of $\gamma$, then $\gamma$ is a simple root and an isolating interval 
$[a,b]$ satisfies $f(a)f(b)< 0$. Since we have an explicit lower bound for the separation of roots of an integral polynomial \cite{R79}, we easily obtain the isolating interval $[a,b]$.  

\begin{lemma}[{\cite{L97}}] \label{lem:signP}
There is an algorithm (using only rational arithmetic) which takes as input an algebraic number $\gamma$ 
and a polynomial $f$ with integer coefficients, and determines the sign of the number $f(\gamma)$.  
\end{lemma}
\begin{proof}
Let $g_\gamma$ be the minimal polynomial of $\gamma$ over $\mathbb{Q}$.  
Since $g_\gamma$ is irreducible, $f(\gamma)=0$ if and only if $g_\gamma$ divides $f$. 
Suppose $g_\gamma$ does not divide $f$. 
We can find an isolating interval $[a,b]$ for $g_\gamma$ and 
$\gamma$, such that $[a,b]$ contains 
no root of $f$. 
Then the sign of $f(a)$ is equal to that of $f(\gamma)$.    
\end{proof}

\begin{lemma} \label{lem:rankM}
There is an algorithm (using only rational arithmetic) which takes as 
input an real algebraic number $\gamma$ and a symmetric matrix $\boldsymbol{M}(t)$ whose entries are in $\mathbb{Q}[t]$, 
and determines the number of the positive eigenvalues and the number of the negative eigenvalues of  $\boldsymbol{M}(\gamma)$. This 
decides whether 
$\boldsymbol{M}(\gamma)$  is positive semidefinite. 
\end{lemma}

\begin{proof}
Let $P(t,x)$ be the polynomial defined by 
\[
P(t,x)=| \boldsymbol{M}(t)-x \boldsymbol{I} |. 
\]
Let $P_i(t)$ be the coefficient of $x^i$ in $P(x)=P(t,x)$.  
By Lemma~\ref{lem:signP}, we can determine the sign of $P_i(\gamma)$. 
Using Descartes' rule of signs, 
the number of the positive roots and the number of the negative roots  of $P(x)=P(\gamma,x)$ 
are determined by the list of the signs of $P_i(\gamma)$.   
\end{proof}

Let $f$ be an irreducible polynomial over $\mathbb{Q}(\gamma)$ for an algebraic integer $\gamma$. 
Let $\eta$ be a zero of $f$. Using Sturm's theorem, $\eta$ can be represented by $(f, I)$, where $I$ is an 
isolating interval for $f$ and $\eta$.   Here the signs in Sturm's sequence can be determined by Lemma~\ref{lem:signP}. 

\begin{lemma} \label{lem:signP2}
There is an algorithm (using only rational arithmetic) which takes as input an algebraic number $\gamma$, a real number $\eta$ that is a root of an irreducible polynomial 
over $\mathbb{Q}(\gamma)$, 
and a polynomial $f$ over $\mathbb{Q}(\gamma)$, and determines the sign of the number $f(\eta)$.  
\end{lemma}
\begin{proof}
Suppose that $\eta$ is represented by $(g,I)$. 
It follows that $f(\eta)=0$ if and only if $g$ divides $f$. 
By Sturm's theorem, we can find an interval $[a,b]$ such that $a$ and $b$ are rational, $[a,b] \subset I$ and 
$f$ has no root in $I$. 
Then the sign of  $f(\eta)$ is the sign of $f(a)$.   
\end{proof}

\begin{lemma} \label{lem:rankM2}
There is an algorithm (using only rational arithmetic) which takes as 
input an real algebraic number $\gamma$, a real number $\eta$ that is a root of an irreducible polynomial 
over $\mathbb{Q}(\gamma)$  and a symmetric matrix $\boldsymbol{M}(t,c)$ whose entries are in $\mathbb{Q}[t,c]$, 
and determines the number of the positive eigenvalues and the number of the negative eigenvalues of  $\boldsymbol{M}(\gamma,\eta)$. This 
decides whether 
$\boldsymbol{M}(\gamma,\eta)$  is positive semidefinite. 
\end{lemma}
\begin{proof}
Let $P(t,c,x)$ be the polynomial defined by 
\[
P(t,c,x)=| \boldsymbol{M}(t,c)-x \boldsymbol{I} |. 
\]
Let $P_i(t,c)$ be the coefficient of $x^i$ in $P(x)=P(t,c,x)$.  
By Lemma~\ref{lem:signP2}, we can determine the sign of $P_i(\gamma,\eta)$. 
Using Descartes' rule of signs, 
the number of the positive roots and the number of the negative roots  of $P(x)=P(\gamma,\eta, x)$ 
are determined by the list of the signs of $P_i(\gamma,\eta)$.   
\end{proof}


\begin{lemma} \label{lem:alg_spherical}
There is an algorithm (using only rational arithmetic) which takes as input 
an algebraic number $\gamma$ and a matrix $\boldsymbol{M}(t)$ whose entries are in $\mathbb{Q}[t]$, and decides
whether $\boldsymbol{M}(\gamma)$ is the distance matrix of a spherical set. 
\end{lemma}
\begin{proof}
First we check if $\boldsymbol{M}(\gamma)$ is dissimilarly. 
Let $P(t,a,x)$ be the polynomial defined by 
\[
P(t,a,x)=|-\boldsymbol{M}(t) +a \boldsymbol{J}-x \boldsymbol{I} |
\]
for indeterminates $a$ and $x$.   
Let $P_i(t,a)$ be the coefficient of $x^i$ in $P(x)=P(t,a,x)$. 
Let $Q_i(t)$ be the coefficient of $a^j$ in $P_i(a)=P_i(t,a)$, where $j$ is the largest exponent that 
satisfies the coefficient of $a^j$ is not divisible by the minimal polynomial $f_\gamma$ of $\gamma$.   
If the coefficient of $a^j$ is divisible by $f_\gamma$ for each $j$, 
then we set $Q_i(t)=0$. 
By Lemma~\ref{lem:signP}, we can determine the sign of $Q_i(\gamma)$. 
For sufficient large $a$, we can determine the sign of $P_i(\gamma,a)$:  
$P_i(\gamma,a)=0$ if and only if $Q_i=0$, 
$P_i(\gamma,a)>0$ if and only if $Q_i(\gamma)>0$, and  
$P_i(\gamma,a)<0$ if and only if $Q_i(\gamma)<0$. 
Using Descartes' rule of signs, 
the number $m$ of the negative roots of $P(x)=P(\gamma,a,x)$ for sufficient large $a$
is determined by the list of the signs of $P_i(\gamma,a)$. 
By Lemma~\ref{rem:rank}, 
$m=0$ if and only if  
$\boldsymbol{M}$ is the distance matrix of a spherical set. 
\end{proof}

\begin{lemma} \label{lem:alg_eta}
There is an algorithm (using only rational arithmetic) which takes as input 
an algebraic number $\gamma$ and 
a  Hermitian matrix $\boldsymbol{H} =\boldsymbol{M} +\sqrt{-1} \boldsymbol{A}$, where $\boldsymbol{M}$ and $ \boldsymbol{A}$ are matrices over $\mathbb{Q}(\gamma)$ that satisfy the condition from Theorem~\ref{thm:eta}, and determines a positive real number $\eta=(f,I)$, where $\eta$ is defined in Theorem~\ref{thm:eta} and $f$ is over $\mathbb{Q}(\gamma)$.  
\end{lemma}
\begin{proof}
Let $\boldsymbol{H}(c)$ be the matrix $\boldsymbol{M} + c \sqrt{-1} \boldsymbol{A}$. 
The value $\eta$ is a unique positive number such that $\boldsymbol{P}\boldsymbol{H}(\eta)\boldsymbol{P}$ is positive semidefinite and   ${\rm Rank} (\boldsymbol{P}\boldsymbol{H}(\eta)\boldsymbol{P})<{\rm Rank} (\boldsymbol{P}\boldsymbol{H}(0)\boldsymbol{P})$. 
Let $P(c,x)$ be the polynomial defined by 
\[
P(c,x)=|\boldsymbol{P}\boldsymbol{H}(c)\boldsymbol{P} -x \boldsymbol{I} |
\]
for an indeterminate $x$.   
Let $P_i(c)$ be the coefficient of $x^i$ in $P(x)=P(c,x)$. 
The polynomial $P_i(c)$ is factored into irreducible polynomials over $\mathbb{Q}(\gamma)$ \cite{T76}. 
The rank of $\boldsymbol{P}\boldsymbol{H}(0)\boldsymbol{P}$ is determined by Lemma~\ref{lem:rankM}. 
The value $\eta$ is determined as the smallest positive zero of $\prod_i P_i(c)$ such that the number of sign differences between consecutive nonzero coefficients $P_i(\eta)$ is smaller than that for $P_i(0)$.  
\end{proof}

\begin{lemma} \label{lem:algo_type}
There is an algorithm (using only rational arithmetic) which takes as 
input an simple graph $G$, and determines the type of $G$.   
\end{lemma}
\begin{proof} 
Let $\boldsymbol{A}$ be the adjacency matrix of $G$. 
Let $\lambda_i$ be the $i$-th smallest eigenvalue of $\boldsymbol{A}$, and $m_i$ the multiplicity of $\lambda_i$. 
Indeed there is an algorithm that gives the factorization of an integral polynomial into irreducible polynomials over $\mathbb{Q}$, see \cite{H02}.   
Let $\boldsymbol{M}(t)$ be the matrix defined by $\boldsymbol{M}(t)=-(t+1)\boldsymbol{A} -t \overline{\boldsymbol{A}}$ for an indeterminate $t$. 
By Lemma \ref{lem:rankM}, we can determine  ${\rm Rank}(\boldsymbol{M}(\lambda_i))$ and ${\rm Rank}(\boldsymbol{P}\boldsymbol{M}(\lambda_i)\boldsymbol{P})$. 
By Lemma~\ref{rem:rank2},  Remark~\ref{rem:Rank(H)}, and Theorems~\ref{thm:dim}, \ref{thm:spherical}, we can determine the type of $G$ as follows. 
$G$ is Type~(1) if and only if ${\rm Rank}(\boldsymbol{P}\boldsymbol{M}(\lambda_1)\boldsymbol{P})=n-m_1-1$ and $\boldsymbol{M}(\lambda_1)$ is the distance matrix of a spherical set.  
$G$ is Type~(2) if and only if $m_1>1$, ${\rm Rank}(\boldsymbol{P}\boldsymbol{M}(\lambda_1)\boldsymbol{P})=n-m_1$, and $\boldsymbol{M}(\lambda_1)$ is the distance matrix of a spherical set. 
$G$ is Type~(3) if and only if $m_1=1$, $\lambda_2<-1$, $\boldsymbol{M}(\lambda_2)$ is not the distance matrix of a spherical set, $\boldsymbol{P}\boldsymbol{M}(\lambda_2)\boldsymbol{P}$ is positive semidefinite, and ${\rm Rank}(\boldsymbol{P}\boldsymbol{M}(\lambda_2)\boldsymbol{P})=n-m_2-2$. 
$G$ is Type~(4) if and only if $m_1=1$ $\lambda_2<-1$, $\boldsymbol{M}(\lambda_2)$ is the distance matrix of a spherical set, and ${\rm Rank}(\boldsymbol{P}\boldsymbol{M}(\lambda_2)\boldsymbol{P})=n-m_2-1$.
If $G$ is not of Type (i) for each $i \in \{1,\ldots ,4\}$, then $G$ is Type~(5). 
\end{proof}

\begin{lemma} \label{lem:algo_rep(G)}
Let $G$ be a digraph with adjacency matrix $\boldsymbol{A}$. Let $G'$ be either the simple graph with the 
adjacency matrix $\boldsymbol{B}=\boldsymbol{A}+\boldsymbol{A}^{T}$ or its complement. Suppose $G'$ is of Type~(1), (2), or (4). 
If the null space of the minimal representation $\xi \boldsymbol{B}+\overline{\boldsymbol{B}}$ is contained in that of $\boldsymbol{A}-\boldsymbol{A}^T$, 
then there is an algorithm (using only rational arithmetic) which determines ${\rm Rep}(G)$. 
\end{lemma}
\begin{proof}
By Lemma~\ref{lem:alg_eta}, 
we can determine $\eta$ such that $-\boldsymbol{P}(\xi \boldsymbol{B}+\overline{\boldsymbol{B}})\boldsymbol{P}+\eta \sqrt{-1} \boldsymbol{P}(\boldsymbol{A}-\boldsymbol{A}^T)\boldsymbol{P}$ is a positive semidefinite matrix of rank less than ${\rm Rep}(G')$.  
Note that ${\rm Rep}(G')$ is determined by Lemma~\ref{lem:algo_type}. 
If there exists a positive number $a$ such that $-(\xi \boldsymbol{B}+\overline{\boldsymbol{B}})+\eta \sqrt{-1} (\boldsymbol{A}-\boldsymbol{A}^T)+a\boldsymbol{J}$ is positive semidefinite, then 
${\rm Rep}(G)$ is the rank of $-\boldsymbol{P}(\xi \boldsymbol{B}+\overline{\boldsymbol{B}})\boldsymbol{P}+\eta \sqrt{-1} \boldsymbol{P}(\boldsymbol{A}-\boldsymbol{A}^T)\boldsymbol{P}$, else ${\rm Rep}(G)={\rm Rep}(G')$ by Theorem~\ref{thm:3-code}. The existence of such $a$ can be checked by a similar manner to Lemma~\ref{lem:alg_spherical}.  Here the signs of coefficients are checked by Lemma~\ref{lem:signP2}.
\end{proof}

We describe the {\it algorithm} to classify the largest 3-codes in $\Omega(d)$. 
We first classify simple graphs $G'$ that may give the oriented graphs $G$ whose representations are the largest 3-codes.   
Let $L_0(\gamma)$ be the all $(2d+2)$-vertex simple graphs $G'$ that represent 2-distance sets in $S^{2d-1}$, with distances $1$ and $\gamma$.  
For $G' \in L_0(\gamma)$,  the representation of $G'$ in $S^{2d-1}$ is the minimal representation.   
The graph in $L_0(\gamma)$ is of Type (1), (2), or (4)  by Theorem~\ref{thm:spherical}.  
The distance $\gamma$ may be less than 1, and $\gamma=(\lambda+1)/\lambda$ holds, where $\lambda$ is the smallest or second-smallest eigenvalue of $G$ by Theorem~\ref{thm:dim}.  
First we produce $L_0(\gamma)$ for any possible $\gamma$ by applying Lemma~\ref{lem:algo_type} to all exhaustive simple graphs with $2d+2$ vertices.  
We have the list of exhaustive simple graphs with at most $10$ vertices \cite{MP13}. 

Let $G'$ be a simple graph in $L_0(\gamma)$.   
Let $\boldsymbol{B}$ be the adjacency matrix of $G'$, and  $\overline{\boldsymbol{B}}$  the adjacency matrix of the compliment.  
Let $\boldsymbol{M}(\lambda)$ be the matrix $(\lambda+1) \boldsymbol{B}+ \lambda \overline{\boldsymbol{B}}$, where $\lambda=1/(\gamma-1)$.  
Let $\mathcal{E}_0'$ be the null space of $\boldsymbol{M}(\lambda)$.
Let $K(G')$ be the set of all oriented graphs $G$ such that 
$\mathcal{E}_0' \subseteq \mathcal{E}_0$, $\boldsymbol{A}+\boldsymbol{A}^T=\boldsymbol{B}$ or 
$\overline{\boldsymbol{B}}$, and ${\rm Rep}(G) \leq d$, where $\boldsymbol{A}$ is the adjacency matrix of $G$ and $\mathcal{E}_0$ be the null space of $\boldsymbol{A}-\boldsymbol{A}^T$.  
Here ${\rm Rep}(G)$ is determined by Lemma~\ref{lem:algo_rep(G)}. 
Note that $\mathcal{E}_0' \subseteq \mathcal{E}_0$ if and only if 
the row space of $\boldsymbol{A}-\boldsymbol{A}^T$ is contained in the row space of $\boldsymbol{M}(\lambda)$. 
Moreover when ${\rm Rank}(\boldsymbol{M}(\lambda))=2d$ we need  ${\rm Rank}(\boldsymbol{A}-\boldsymbol{A}^T)=2d$ in order to have ${\rm Rep}(G)=d$ by the proof of Theorem~\ref{thm:half}.   These conditions can reduce a large number of choices of $\boldsymbol{A}$. 
We can make the list of  $\boldsymbol{A}$ and give ${\rm Rep}(G)$ for each $\boldsymbol{A}$. 
 If $K(G')$ is empty, then $G'$ is removed from $L_0(\gamma)$. 
Note that $L_0(\gamma)$ is not empty because the union of $d$ mutually orthogonal equilateral triangles is a $3$-code with $3d$ points.  

Let $L(n,\gamma)$ be the set of all $n$-vertex simple graphs $G'$ of Type (1), (2) or (4) such that $K(G')$ is not empty. Now $L(2d+2,\gamma)=L_0(\gamma)$. 
The list of $L(n+1,\gamma)$ is produced from $L(n,\gamma)$ by the following algorithm based on \cite{L97}. 
Possibilities of augmenting graph $G' \in L(n,\gamma)$ by an $(n+1)$-th vertex are examined. 
There are $2^n$ possibilities of a newly added $(n+1)$-th row of $\boldsymbol{B}$.  
Its entries are in $\{0, 1\}$.  
We may think of these $2^n$ sequences as leaves of a binary tree of depth $n$. 
In depth at least $2d+2$, the search effectively pruned by checking various sub-matrices of size 
$2d+2$ against the list $L(2d+2,\gamma)$.  
Let $\tilde{\boldsymbol{B}}$ be a new matrix obtained from $\boldsymbol{B}$ by adding a new column and a new row, 
and $\tilde{G'}$ the simple graph with the adjacency matrix $\tilde{\boldsymbol{B}}$. 
We check whether $\tilde{G'}$ already appears in $L(n+1, \gamma)$. 
If not, then we form the $2d+2$ graphs $\tilde{G'}_i$ for $1 \leq i \leq 2d+2$, where 
$\tilde{G'}_i$ is the induced subgraph of $\tilde{G'}$ which 
arises by deleting its vertex $i$. 
Since any induced subgraph of $\tilde{G'}$ on $2d+2$ vertices is contained in at least one of the graphs 
$\tilde{G'}_1,\ldots,\tilde{G'}_{2d+2}$, $G'$, it follows that 
${\rm Rep}(\tilde{G'})\leq 2d$ if and only if all graphs $\tilde{G'}_1,\ldots,\tilde{G'}_{2d+2}$, $G'$ are appears in $L(n,\gamma)$. 
If $\tilde{G'}$ is of Type (1), (2), or (4) and $K(\tilde{G'})$ is not empty, then $\tilde{G'}$ is appended to $L(n,\gamma)$. 

The smallest number $n$ such that $L(n+1,\gamma)$ is empty for any $\gamma$ is the size of 
a largest 3-code. 
For all $G'$ in $L(n,\gamma)$, the union of the sets $K(G')$ gives the classification of oriented graphs whose complex representations are largest $3$-codes.  

By the {\it algorithm} we can classify the largest 
complex $3$-codes in
$\Omega(d)$ for $d=1,2,3$. 
Table 1 shows the number of largest $3$-codes. 
\begin{center}
$
\begin{array}{c|ccc}
	d   & 1 & 2 & 3   \\ \hline 
	|X| & 4 & 8 & 9   \\
	\#    & 1 & 1 & 50 
\end{array}
	$\\
Table 1
\end{center}
For $d\geq 4$, a usual computer cannot give the classification. 
For $d=1,2$, the largest complex $3$-codes are tight, and they are considered in 
Section~\ref{sec:tight}. 
For $d=3$, 
one of the largest $3$-codes is the union of 
three equilateral triangles in $\mathbb{C}^1$, which are orthogonal to each other. 
For the other largest $3$-codes $X$,  
$\phi(X \cup e^{2\pi \sqrt{-1}/3} X \cup e^{4\pi \sqrt{-1}/3} X)$ is the unique largest $2$-distance set in $\mathbb{R}^6$ \cite{DGS77,S68}, 
which is the minimal representation of the Schl${\rm \ddot{a}}$fli
graph with $27$ vertices.  

\section{Tight complex spherical $3$-codes} \label{sec:tight}
In this section, we give upper bounds on complex spherical $3$-codes and characterize $3$-codes achieving the upper bound by using another type of codes, called $\mathcal{S}$-codes. 
A tight $\mathcal{S}$-code with degree $|\mathcal{S}|-1$ has the structure of a commutative association scheme. 
We review the theory of complex spherical designs and codes \cite{RSX} and commutative association schemes \cite{BI}.   

Let $\nn$ denote the set of nonnegative integers.  
A finite subset $\mathcal{S}$ of $\mathbb{N}^2$ is a {\it lower set} if the following condition is satisfied: if $(i,j)\in\mathbb{N}^2$ is in $\mathcal{S}$ then so is $(k,l)$ for any $0\leq k\leq i$ and $0\leq l \leq j$. 
A finite set $X$ in $\Omega(d)$ is an {\it $\mathcal{S}$-code} if there exists a polynomial $F(x)=\sum_{(k,l)\in\mathcal{S}}a_{k,l}x^k\bar{x}^l$ with real coefficients such that $F(\alpha)=0$ for any $\alpha\in D(X)$ and $F(1)> 0$.

We denote by $\Hom_d(k,l)$ the vector space generated by homogeneous polynomials of degree $k$ in variables $\{z_1,\ldots,z_d\}$ and of degree $l$ in variables $\{\bar{z}_1,\ldots,\bar{z}_d\}$.
The unitary group $U(d)$ acts on $\Hom_d(k,l)$, 
and the irreducible decomposition is 
\begin{align*}
\Hom_d(k,l)=\bigoplus_{m=0}^{\min(k,l)}\Harm_d(k-m,l-m),   
\end{align*}
where ${\rm Harm}(k,l)$ is the subspace
of ${\rm Hom}(k,l)$ that is the kernel of the Laplace operator
$\Delta=\sum_{i=1}^d\partial^2/\partial z_i\partial\overline{z_i}$.

Define an inner product on polynomials $f$ and $g$ on $\Om(d)$ as follows:
\[
\ip{f}{g} := \int_{\Om(d)} \overline{f(\boldsymbol{z})}g(\boldsymbol{z}) \dd \boldsymbol{z}.
\]
Here $\mathrm{d} \boldsymbol{z}$ is the unique invariant Haar measure on $\Om(d)$, normalized so that $\int_{\Om(d)}\mathrm{d} \boldsymbol{z} = 1$. With respect to this inner product, $\Harm_d(k,l)$ is orthogonal to $\Harm_d(k',l')$ whenever $(k,l) \neq (k',l')$. 
For each $(k,l)\in\mathbb{N}^2$, fix an orthonormal basis $\{e_1,\ldots,e_{m^d_{k,l}}\}$ for the space $\Harm_d(k,l)$.
For a finite set $X$ in $\Omega(d)$, we define the characteristic matrix $\boldsymbol{H}_{k,l}$ with rows indexed by $X$ and columns indexed by $\{1,2,\ldots,m^d_{k,l}\}$ as 
\begin{align*}
(\boldsymbol{H}_{k,l})_{\boldsymbol{x},i}=e_i(\boldsymbol{x})
\end{align*}
for $\boldsymbol{x}\in X$ and $i\in\{1,2,\ldots,m^d_{k,l}\}$.

For each $(k,l) \in \nn^2$, we define a Jacobi polynomial $g_{k,l}^d$ as follows:
\[
g_{k,l}^d(x) := \frac{m_{k,l}^d(d-2)!k!l!}{(d+k-2)!(d+l-2)!} \sum_{r=0}^{\min\{k,l\}} (-1)^r \frac{(d+k+l-r-2)!}{r!(k-r)!(l-r)!}x^{k-r}\overline{x}^{l-r}, 
\]
where 
\begin{align}
m_{k,l}^d&=\dim(\Harm_d(k,l))\nonumber \\
& = \binom{d+k-1}{d-1}\binom{d+l-1}{d-1} - \binom{d+k-2}{d-1}\binom{d+l-2}{d-1}.\label{eq:dim}
\end{align}
The Jacobi polynomials which we used are 
\begin{align*}
g_{0,0}^d(x) & = 1, \\
g_{1,0}^d(x) & = d x,  \displaybreak[0]\\
g_{0,1}^d(x) & = d \overline{x},  \displaybreak[0]\\
g_{1,1}^d(x) & = (d+1)(d x\overline{x} - 1). 
\end{align*}
Recursively, the Jacobi polynomials satisfy
\begin{align}\label{eq:rec}
x g_{k,l}^d(x) = a_{k,l}g_{k+1,l}^d(x)+b_{k,l}g_{k,l-1}^d(x), 
\end{align}
where $a_{k,l}=\frac{k+1}{d+k+l}$, $b_{k,l}=\frac{d+l-2}{d+k+l-2}$ and set $g_{k,l}^d(x)=0$ unless $(k,l) \in \nn^2$.

The essential property of the Jacobi polynomials is the following theorem, known as Koornwinder's addition theorem.

\begin{theorem}\label{thm:addition}
Let $\{e_1,\ldots,e_{m^d_{k,l}}\}$ be an orthonormal basis for the space $\Harm_d(k,l)$. Then for any $\boldsymbol{a},\boldsymbol{b} \in \Om(d)$,
\[
\sum_{i=1}^{m^d_{k,l}} \overline{e_i(\boldsymbol{a})}e_i(\boldsymbol{b}) = g_{k,l}^d(\boldsymbol{a}^*\boldsymbol{b}).
\]
\end{theorem}

An upper bound on the size of an $\mathcal{S}$-code is given as follows.
\begin{theorem}[{\cite[Theorem~4.2 (ii)]{RSX}}]\label{thm:42}
For $d\geq 2$, let $X$ be an $\mathcal{S}$-code in $\Omega(d)$. Then $|X|\leq \sum_{(k,l)\in\mathcal{S}}\dim({\rm Harm}(k,l))$ holds.
\end{theorem}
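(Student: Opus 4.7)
My plan is to adapt the classical Delsarte--Goethals--Seidel polynomial-method argument to the complex sphere. Let $F(x)=\sum_{(k,l)\in\mathcal{S}}a_{k,l}x^k\overline{x}^l$ be a real-coefficient annihilator of $D(X)$. Because $\mathcal{S}$ is a nonempty lower set we have $(0,0)\in\mathcal{S}$, so after adjusting the constant coefficient we may assume $F(1)\ne 0$. For each $y\in X$, I form the function $F_y:\Omega(d)\to\mathbb{C}$ by $F_y(z)=F(z^*y)$, and I aim to show that (i) the $F_y$ are linearly independent, and (ii) they all lie in a space of dimension at most $\sum_{(k,l)\in\mathcal{S}}\dim({\rm Harm}(k,l))$.

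Step (i) is immediate from the annihilator property: if $\sum_{y\in X}c_yF_y\equiv 0$ on $\Omega(d)$, then evaluating at any $z=y_0\in X$ gives $c_{y_0}F(1)=\sum_{y\in X}c_yF(y_0^*y)=0$, hence $c_{y_0}=0$. For step (ii), I would exploit the bidegrees of the building blocks: $(z^*y)^k=(\sum_i\overline{z_i}y_i)^k$ has degree $k$ in $\overline{z}$ only and $(y^*z)^l=(\sum_i\overline{y_i}z_i)^l$ has degree $l$ in $z$ only, so as a function of $z$ the product $(z^*y)^k(y^*z)^l$ lies in ${\rm Hom}(l,k)$. Hence $F_y\in\sum_{(k,l)\in\mathcal{S}}{\rm Hom}(l,k)$. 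On $\Omega(d)$ the standard harmonic decomposition gives
\[
{\rm Hom}(l,k)|_{\Omega(d)}=\bigoplus_{j=0}^{\min(k,l)}{\rm Harm}(l-j,k-j),
\]
and the lower-set condition on $\mathcal{S}$ together with the symmetry $\dim({\rm Harm}(k,l))=\dim({\rm Harm}(l,k))$ shows that the ambient space has dimension at most $\sum_{(k,l)\in\mathcal{S}}\dim({\rm Harm}(k,l))$. Combining (i) and (ii) yields the desired bound on $|X|$.

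The most delicate point will be the dimension bookkeeping in step (ii): one must justify the harmonic decomposition of ${\rm Hom}(l,k)|_{\Omega(d)}$ (a standard Peter--Weyl statement for $U(d)$, provable by factoring out $|z|^2=1$ iteratively) and verify carefully that the lower-set hypothesis on $\mathcal{S}$ is exactly what prevents any ${\rm Harm}(a,b)$ with $(b,a)\notin\mathcal{S}$ from appearing in the decomposition after the swap induced by complex conjugation. The explicit binomial formula $\dim({\rm Harm}(k,l))=\binom{d+k-1}{d-1}\binom{d+l-1}{d-1}-\binom{d+k-2}{d-1}\binom{d+l-2}{d-1}$ then drops out from the identity $\dim({\rm Harm}(k,l))=\dim({\rm Hom}(k,l))-\dim({\rm Hom}(k-1,l-1))$, which is itself a telescoping consequence of the same decomposition.
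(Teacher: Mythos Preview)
The present paper does not prove this statement; it is quoted from \cite{RSX} (their Theorem~4.2(ii)), so there is no in-paper argument to compare against. Your outline is exactly the Delsarte--Goethals--Seidel polynomial-method argument that \cite{RSX} uses, and your step~(ii)---the harmonic decomposition of ${\rm Hom}(l,k)|_{\Omega(d)}$ together with the lower-set hypothesis to control the ambient dimension---is correct.

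There is, however, a genuine slip in step~(i). You cannot ``adjust the constant coefficient'' to force $F(1)\neq 0$: changing $a_{0,0}$ destroys the vanishing $F(\alpha)=0$ on $D(X)$. Worse, with the definition exactly as transcribed in this paper there exist $\mathcal{S}$-codes for which \emph{every} admissible annihilator has $F(1)=0$. For instance, take any real spherical $2$-distance set $X\subset S^{d-1}\subset\Omega(d)$ and $\mathcal{S}=\{(0,0),(1,0),(0,1)\}$; since $D(X)\subset\mathbb{R}$ consists of two values, the only real-coefficient annihilators $a_{0,0}+a_{1,0}x+a_{0,1}\overline{x}$ are scalar multiples of $x-\overline{x}$, all of which vanish at $1$, and then every $F_y$ is identically zero on $X$. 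As such $X$ can have $\binom{d+1}{2}>2d+1$ points for $d\ge 4$ (e.g.\ the Johnson scheme $J(d+1,2)$), the bound itself would fail under the literal definition given here. The resolution is that the definition of $\mathcal{S}$-code in \cite{RSX} carries the additional requirement $F(1)\neq 0$; this is among the ``more details'' the present paper explicitly defers to that reference. Once that normalisation is in force, your linear-independence argument and the remainder of the proof go through as written.
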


An $\mathcal{S}$-code is {\it tight} if equality holds in Theorem~\ref{thm:42}. 
Tight codes are related to complex spherical designs. 
For a finite lower set $\mathcal{T}$, a finite subset $X$ of $\Omega(d)$ is a {\it complex spherical $\mathcal{T}$-design} if, for every polynomial $f \in \mathrm{Hom}(k,l)$ such that $(k,l)$ is in $\mathcal{T}$, 
\begin{equation}
\frac{1}{|X|} \sum_{\boldsymbol{z} \in X} f(\boldsymbol{z}) = \int_{\Omega(d)} f(\boldsymbol{z}) \mathrm{d} \boldsymbol{z},
\end{equation}
where $\mathrm{d}\boldsymbol{z}$ is the Haar measure on $\Omega(d)$ 
normalized by $\int_{\Omega(d)} \mathrm{d} \boldsymbol{z}=1$.
As stated in the following theorem, tight $\mathcal{S}$-codes are complex spherical $\mathcal{S}*\mathcal{S}$-designs, 
where $\mathcal{S}*\mathcal{S}:=
\{(k+l',k'+l)\mid (k,l), (k',l') \in \mathcal{S}\}$. 

\begin{theorem}[{\cite[Theorem 5.4]{RSX}}]\label{thm:tight}
Let $X$ be a finite  set in $\Omega(d)$ and let $\mathcal{S}$ be a lower set. Then the following are equivalent:
\begin{enumerate}
\item $X$ is a tight $\mathcal{S}$-code. 
\item $X$ is a tight $\mathcal{S}*\mathcal{S}$-design.
\item $X$ is an $\mathcal{S}$-code and an $\mathcal{S} * \mathcal{S}$-design.
\end{enumerate}
\end{theorem} 
An $\mathcal{S}*\mathcal{S}$-design satisfies that $|X|\geq \sum_{(k,l)\in\mathcal{S}}\dim({\rm Harm}(k,l))$, and an $\mathcal{S}*\mathcal{S}$-design $X$ is {\it tight} if the equality is attained.

Let $X$ have an angle set $D(X)=\{\alpha_1,\ldots,\alpha_s\}$, and set $\alpha_0=1$. 
For $0\leq i\leq s$, define the binary relation $R_i$ as the set of pairs $(\boldsymbol{x},\boldsymbol{y})\in X\times X$ such that $\boldsymbol{x}^*\boldsymbol{y}=\alpha_i$.
The following is a key theorem to characterize tight $3$-codes.  

\begin{theorem}[{\cite[Theorem 6.1]{RSX}}]\label{thm:S1}
Let $X$ be a tight $\mathcal{S}$-design with degree $s=|\mathcal{S}|-1$ for a lower set $\mathcal{S}$. Then $X$ with binary relations defined from angles is a commutative association scheme. 
Moreover, the primitive idempotents are $\frac{1}{|X|}\boldsymbol{H}_{k,l}\boldsymbol{H}_{k,l}^*$, $(k,l)\in \mathcal{S}$.  
\end{theorem}
\begin{remark}
If $X$ is a finite set in $\Omega(d)$, then the Gram matrix $\boldsymbol{G}=(\boldsymbol{x}^*\boldsymbol{y})_{\boldsymbol{x},\boldsymbol{y}\in X}$ is  $\frac{1}{d}\boldsymbol{H}_{0,1}\boldsymbol{H}_{0,1}^*$. 
\end{remark}
To characterize the tight $3$-codes, we use the theory of commutative association schemes.
 
Let $X$ be a finite set and let $R_i$ be a nonempty binary relation on $X$ for $i\in\{0,1,\ldots,s\}$. The {\it adjacency matrix} $\boldsymbol{A}_i$ of relation $R_i$ is defined to be the $(0,1)$-matrix with rows and columns indexed by $X$ such that $(\boldsymbol{A}_i)_{xy}=1$ if $(x,y)\in R_i$ and $(\boldsymbol{A}_i)_{xy}=0$ otherwise. 
A pair $(X,\{R_i\}_{i=0}^s)$ is a {\it commutative association scheme}, or simply an {\it association scheme} if the following five conditions hold:
\begin{enumerate}
\item $\boldsymbol{A}_0$ is the identity matrix.
\item $\sum_{i=0}^s\boldsymbol{A}_i=\boldsymbol{J}$, where $\boldsymbol{J}$ is the all-one matrix.
\item For any $i\in\{0,1,\ldots,s\}$, there exists $i'\in\{0,1,\ldots,s\}$ such that $\boldsymbol{A}_i^T=\boldsymbol{A}_{i'}$.
\item For any $i,j,k\in\{0,1,\ldots,s\}$, there exists $p_{i,j}^k$ such that $\boldsymbol{A}_i\boldsymbol{A}_j=\sum_{k=0}^sp_{i,j}^k\boldsymbol{A}_k$.
\item $\boldsymbol{A}_i\boldsymbol{A}_j=\boldsymbol{A}_j\boldsymbol{A}_i$ for any $i,j$.
\end{enumerate}
The algebra $\mathcal{A}$ generated by all adjacency matrices $\boldsymbol{A}_0,\boldsymbol{A}_1,\ldots,\boldsymbol{A}_s$ over $\mathbb{C}$ is called the 
{\it Bose-Mesner algebra}. 

Since the  Bose-Mesner algebra is semisimple and commutative, 
there exists a unique set of primitive idempotents of the  Bose-Mesner algebra, which is denoted by $\{\boldsymbol{E}_0,\boldsymbol{E}_1,\ldots,\boldsymbol{E}_s\}$ \cite[Theorem 3.1]{BI}. 
Since $\{\boldsymbol{E}_0^T,\boldsymbol{E}_1^T,\ldots,\boldsymbol{E}_s^T\}$ forms also the set of primitive idempotents, 
we define $\hat{i}$ by the index such that $\boldsymbol{E}_{\hat{i}}=\boldsymbol{E}_i^T$ for $0\leq i\leq s$. 
Note that $\hat{0}=0$. 
The Bose-Mesner algebra is closed under the entrywise product $\circ$.  
We define structure constants, the {\it Krein parameters} $q_{i,j}^k$, for $\boldsymbol{E}_0,\boldsymbol{E}_1,\ldots,\boldsymbol{E}_s$ under entrywise product:
$$|X|\boldsymbol{E}_i\circ |X|\boldsymbol{E}_j=|X|\sum_{k=0}^sq_{i,j}^k\boldsymbol{E}_k.$$
By the commutativity of the entrywise product, $q_{i,j}^k=q_{j,i}^k$ holds for any $i,j$. 
We need the following fundamental properties on Krein parameters in the proof of Theorem~\ref{thm:chartight}. 
\begin{lemma}\label{lem:as}
Let $(X,\{R_i\}_{i=0}^s)$ be a commutative association scheme of class $s$.
Let $q_{i,j}^k$ be its Krein parameters.  
Then the following hold for any $i,j,k,l$. 
\begin{enumerate}
\item\label{it:1} $q_{i,j}^k\geq0$. 
\item\label{it:2} $q_{i,0}^k=\delta_{i,k}$. 
\item\label{it:3} $q_{i,j}^0=m_i\delta_{i,\hat{j}}$.
\item\label{it:4} $\sum_{j=0}^s q_{i,j}^k=m_i$. 
\item\label{it:5} $m_k q_{i,j}^k=m_{\hat{j}} q_{i,\hat{k}}^{\hat{j}}$. 
\item\label{it:6} $\sum_{\alpha=0}^s q_{i,j}^{\alpha}q_{k,\alpha}^{l}=\sum_{\beta=0}^s q_{k,i}^{\beta}q_{\beta ,j}^{l}$.
\end{enumerate} 
\end{lemma}
\begin{proof}
See \cite[Proposition 3.7, Theorem 3.8]{BI}.
\end{proof}
The matrix $\boldsymbol{B}_i^*=(q_{i,j}^{k})_{j,k=0}^{s}$ is called the {\it Krein matrix} for $i\in\{0,1,\ldots,s\}$.

Both sets of matrices $\{\boldsymbol{A}_0,\boldsymbol{A}_1,\ldots,\boldsymbol{A}_s\}$ and $\{\boldsymbol{E}_0,\boldsymbol{E}_1,\ldots,\boldsymbol{E}_s\}$ are bases for the Bose-Mesner algebra. Therefore there exist change of basis matrices $\boldsymbol{P}$ and $\boldsymbol{Q}$ defined as follows;
\[
\boldsymbol{A}_i=\sum_{j=0}^s\boldsymbol{P}_{ji}\boldsymbol{E}_j,\quad \boldsymbol{E}_j=\frac{1}{|X|}\sum_{i=0}^s\boldsymbol{Q}_{ij}\boldsymbol{A}_i.
\]
Then we have $\boldsymbol{P}=\frac{1}{|X|}\boldsymbol{Q}^{-1}$. 
We call $\boldsymbol{P}$ and $\boldsymbol{Q}$ the {\it eigenmatrix} and {\it second eigenmatrix} of the association scheme, respectively. 
For each $i\in\{0,1,\ldots,s\}$, $k_i:=\boldsymbol{P}_{i0}$ and $m_i:=\boldsymbol{Q}_{i0}$ are called the $i$-th valency and multiplicity, respectively.

The Krein matrices $\boldsymbol{B}_i^*$ and the second eigenmatrix $\boldsymbol{Q}$ are related as follows. The proof is essentially same as that of \cite[Theorem 4.1]{BI}.  
A vector $\boldsymbol{v}$ is {\it standard} if the first entry of $\boldsymbol{v}$ is $1$. 
\begin{lemma}\label{lem:eigen}
Let $(X,\{R_i\}_{i=0}^s)$ be a commutative association scheme with the Krein matrices $\boldsymbol{B}_i^*$ and the second eigenmatrix $\boldsymbol{Q}$. Let $\boldsymbol{v}_i=(\boldsymbol{Q}_{i0},\boldsymbol{Q}_{i1},\ldots,\boldsymbol{Q}_{is})$ be the $i$-th row of $\boldsymbol{Q}$ for $i\in\{0,1,\ldots,s\}$. 
Then $\boldsymbol{v}_i^T$ is characterized as the unique standardized common right eigenvector $\boldsymbol{v}^T$ of the Krein matrices $\boldsymbol{B}_j^*$ such that $\boldsymbol{B}_j^*\boldsymbol{v}^T=\boldsymbol{Q}_{ij}\boldsymbol{v}^T$.   
\end{lemma}
\begin{proof}
Regard the left multiplication with respect to the entrywise product $\circ$ as linear transformation and express them in matrix form with respect to $\{\boldsymbol{E}_0,\boldsymbol{E}_1,\ldots,\boldsymbol{E}_s\}$. 
Then we have an algebra homomorphism $\varphi$ from the  Bose-Mesner algebra to $\mathrm{Mat}_{s+1}(\mathbb{C})$ defined by $\varphi(\boldsymbol{E}_i)=({\boldsymbol{B}_i^*})^T$. 
The rest of the proof is obtained by replacing the roles $\boldsymbol{A}_i,\boldsymbol{P}$ with $\boldsymbol{E}_i,\boldsymbol{Q}$ respectively in the proof of \cite[Theorem 4.1(ii)]{BI}.
\end{proof}

We mention that a complex spherical $s$-code can be obtained from a commutative association scheme of class $s$ as follows. 
Let $\boldsymbol{E}_i$ be a primitive idempotent of the commutative association scheme such that $\boldsymbol{E}_i^T\neq \boldsymbol{E}_i$ and $\boldsymbol{E}_i$ has no repeated rows. 
Since the primitive idempotent is positive semidefinite Hermitian matrices, there exists a $|X|\times m_i$ matrix $\boldsymbol{F}$ such that $\boldsymbol{F}\boldsymbol{F}^T=
(1/m_i|X|)\boldsymbol{E}_i$. 
Then the set $X$ of the column vectors of $\boldsymbol{F}$ forms a finite set in $\Omega(m_i)$ such that $D(X)=\{ \boldsymbol{Q}_{ji}/\boldsymbol{Q}_{0i} \mid 1\leq j\leq s\}$.  
We give an example of complex $3$-codes in this manner. 
This example is not tight, but has large cardinality.     
\begin{example}\label{eq:LM}
In \cite{LM88}, an infinite family of certain distance-regular digraphs of girth $4$ was constructed. 
Note that a distance-regular digraph of girth $s+1$ corresponds to a commutative association scheme of class $s$ with the adjacency matrices determined from the path length in digraphs \cite{D81}.
The commutative association scheme of class $3$ has the following second eigenmatrix \cite{EM87}: 
\begin{align*}
\boldsymbol{Q}=\begin{pmatrix}
1 & \mu(2\mu^2-1) & (2\mu^2-1)(2\mu^2-2\mu+1) & \mu(2\mu^2-1)\\
1 & \mu^2-\mu+\mu^2\sqrt{-1} & -(2\mu^2-2\mu+1) & \mu^2-\mu-\mu^2\sqrt{-1} \\
1 & -\mu & 2\mu-1 & -\mu \\
1 & \mu^2-\mu-\mu^2\sqrt{-1} & -(2\mu^2-2\mu+1) & \mu^2-\mu+\mu^2\sqrt{-1} 
\end{pmatrix},
\end{align*}
where $\mu$ is any power of $2$. 
Then the primitive idempotent $\boldsymbol{E}_1$ yields a complex spherical $3$-code $X$ in $\Omega(\mu(2\mu^2-1))$ with $|X|=4\mu^4$ and \[
D(X)=\left\{\frac{\mu-1\pm\mu \sqrt{-1}}{2\mu^2-1},\frac{-1}{2\mu^2-1}\right\}.
\] 
\end{example}

\subsection{Tight complex spherical $3$-codes}
\label{subsec:large_3-code}
Let $X$ be a $3$-code in $\Omega(d)$ with $D(X)=\{\alpha,\overline{\alpha},\beta\}$, where $\alpha$ is an imaginary number and $\beta$ is a real number.
Note that $\phi(X)$ is a real $s$-code with $s=1$ or $2$.
When $d=1$, $|X|=|\phi(X)|\leq 5$ with equality if and only if $\phi(X)$ is the regular $5$-gon \cite{DGS77}.
In this case, $X$ has the following angle set $\{e^{2\pi i/5}:0\leq i\leq 4\}$, which implies that $X$ has degree $4$.
Thus $|X|\leq 4$ holds.
When $d\geq2$, we can easily find real numbers $a,b,c$ such that $F(x)=a x \overline{x} +b(x+\overline{x})+c$ is an annihilator polynomial of $X$. 
This implies that $X$ is an $\mathcal{S}$-code, where $\mathcal{S}=\{(0,0),(1,0),(0,1),(1,1)\}$. 
By Theorem~\ref{thm:42} with equation \eqref{eq:dim}, we have the following upper bound for $3$-codes. 
\begin{theorem}\label{thm:3codebound}
Let $X$ be a $3$-code in $\Omega(d)$.
Then $$|X|\leq
\begin{cases}
4 &\text{if } d=1, \\
d^2+2d &\text{if }d\geq2.
\end{cases} $$
\end{theorem}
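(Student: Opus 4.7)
The plan is to handle the two cases separately, following the hints already given in the paragraph preceding the theorem statement.

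For the case $d=1$, I would observe that $\phi(X)\subset S^1\subset\R^2$. Since $X$ is a $3$-code with angle set $\{\alpha,\overline{\alpha},\beta\}$, the set of real parts $\{\mathrm{Re}(x^*y):x\ne y\in X\}$ equals $\{\mathrm{Re}(\alpha),\beta\}$, so $\phi(X)$ is a $2$-distance set on the circle. The Delsarte--Goethals--Seidel bound for spherical $2$-distance sets in $S^1$ gives $|\phi(X)|\le 5$, with equality only when $\phi(X)$ is the regular $5$-gon. But the regular $5$-gon, lifted back to $\Omega(1)$, has angle set $\{e^{2\pi\sqrt{-1}k/5}:k=1,2,3,4\}$, which has cardinality $4$ and therefore is not a $3$-code. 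Hence $|X|=|\phi(X)|\le 4$.

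For the case $d\ge 2$, the strategy is to realize $X$ as an $\mathcal{S}$-code for the lower set $\mathcal{S}=\{(0,0),(1,0),(0,1),(1,1)\}$ and then invoke the Roy--Suda bound (the theorem of \cite{RSX} quoted above). To produce the annihilator, I look for real coefficients $a,b,c$ such that
\[
F(x)=ax\overline{x}+b(x+\overline{x})+c
\]
vanishes on $D(X)=\{\alpha,\overline{\alpha},\beta\}$. Because $F(x)=F(\overline{x})$ automatically, this is just the two real conditions $F(\alpha)=0$ and $F(\beta)=0$ in three unknowns $a,b,c$, so a nonzero solution exists. Hence $X$ is an $\mathcal{S}$-code.

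Applying Theorem 4.2(ii) of \cite{RSX} then yields
\[
|X|\le\sum_{(k,l)\in\mathcal{S}}\dim(\mathrm{Harm}(k,l))=1+d+d+(d^2-1)=d^2+2d,
\]
using $\dim\mathrm{Harm}(0,0)=1$, $\dim\mathrm{Harm}(1,0)=\dim\mathrm{Harm}(0,1)=d$, and $\dim\mathrm{Harm}(1,1)=d^2-1$ (which follows from $\dim\mathrm{Harm}(k,l)=\dim\mathrm{Hom}(k,l)-\dim\mathrm{Hom}(k-1,l-1)$ for $k,l\ge 1$). No step is really an obstacle here; the only subtle point is the $d=1$ case, where the Roy--Suda bound degenerates (giving only $3$) and one must instead argue via the regular pentagon as above.
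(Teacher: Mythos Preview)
Your proposal is correct and follows essentially the same route as the paper: the $d=1$ case via the $2$-distance-set bound on $S^1$ and ruling out the regular pentagon, and the $d\ge 2$ case by exhibiting the annihilator $F(x)=ax\overline{x}+b(x+\overline{x})+c$ to make $X$ an $\mathcal{S}$-code for $\mathcal{S}=\{(0,0),(1,0),(0,1),(1,1)\}$ and then applying the Roy--Suda bound. The only minor imprecision is that $\phi(X)$ could a priori be a $1$-distance set (if $\mathrm{Re}(\alpha)=\beta$), but this only strengthens the conclusion.
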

Note that the example for $d=1$ coincides with the case of $\mu=1$ in Example~\ref{eq:LM}. 
However, a tight $3$-code is rare, shown in the following theorem.
\begin{theorem}\label{thm:chartight}
Let $X$ be a $3$-code in $\Omega(d)$ attaining equality in Theorem~$\ref{thm:3codebound}$.
Then one of the following holds;
\begin{enumerate}
\item $d=1$ and $D(X)=\{\pm\sqrt{-1},-1\}$,
\item $d=2$ and $D(X)=\{\pm\sqrt{-1}/\sqrt{3},-1\}$.
\end{enumerate}
\end{theorem}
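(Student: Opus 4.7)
By Theorem~\ref{thm:tight}, a tight $3$-code $X\subset\Omega(d)$ with $d\ge 2$ is a complex spherical $\mathcal{S}*\mathcal{S}$-design for $\mathcal{S}=\{(0,0),(1,0),(0,1),(1,1)\}$. Expanding the design condition into moment identities (computing the integrals $\int_{\Omega(d)}(x^*y)^a(y^*x)^b\,dy$ by unitary invariance), for every $x\in X$ and every $(a,b)\in\mathcal{S}*\mathcal{S}$,
\[
\sum_{y\in X}(x^*y)^a(y^*x)^b\;=\;\delta_{a,b}\,\frac{|X|\,a!\,(d-1)!}{(a+d-1)!}.
\]
Moreover, by Theorem~\ref{thm:S1}, the relations $R_0,R_1,R_2,R_3$ determined by $\{1,\alpha,\overline{\alpha},\beta\}$ give $X$ the structure of a commutative non-symmetric $4$-class association scheme, and since $R_2=R_1^T$ the valencies satisfy $k_1=k_2=:k$, with $1+2k+k_3=|X|=d^2+2d$. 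Writing $\alpha=r+s\sqrt{-1}$ with $s>0$, applying the identities above for $(a,b)\in\{(1,0),(1,1),(2,0),(2,1),(2,2)\}$ yields five polynomial equations in the unknowns $(r,s,\beta,k,k_3)$, to be combined with the valency equation.

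The plan is to reduce this overdetermined system to a single polynomial equation in $r$ and $d$. Subtracting the $(2,0)$-identity from the $(1,1)$-identity yields immediately $4ks^2=d+2$, which together with $2k+k_3=d^2+2d-1$ expresses $k$ and $k_3$ as rational functions of $s^2$. The $(1,0)$- and $(1,1)$-identities then determine $\beta$ and $k_3\beta^2$ as rational functions of $r,s,d$; feeding these into the $(2,1)$-identity solves $s^2$ rationally in terms of $r,d$; and substitution into the $(2,2)$-identity produces the desired polynomial relation $\Psi(r,d)=0$. The key claim is that, under the admissibility constraints $r^2+s^2\le 1$, $|\beta|\le 1$, and $k,k_3\in\mathbb{Z}_{>0}$ with $2k+k_3=d^2+2d-1$, the equation $\Psi(r,d)=0$ forces $d=2$ and $r=0$, yielding $s=1/\sqrt{3}$, $\beta=-1$, $k=3$, $k_3=1$, which proves case~$(2)$. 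The main obstacle is this algebraic elimination for $d\ge 3$: the cleanest outcome would be that $\Psi$ factors with one factor forcing $r=0$ (by the even/odd parity of the equations in $r$) and the other reducing to an equation in $d$ whose only root consistent with positive integer valencies is $d=2$.

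The case $d=1$ is handled separately, because $m_3=d^2-1=0$ makes the four-class scheme degenerate. The paragraph preceding Theorem~\ref{thm:3codebound} already shows $|X|\le 4$ in this case, the regular pentagon being excluded since its complex angle set has four values rather than three. A direct classification of $4$-point $2$-distance sets on $S^1$ identifies the unique configuration (up to rotation and reflection) as the square, so that $X$ is, up to unitary equivalence, the fourth roots of unity, giving $D(X)=\{\pm\sqrt{-1},-1\}$ and case~$(1)$.
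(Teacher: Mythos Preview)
Your approach via moment identities is a legitimate alternative to the paper's Krein-parameter computation, and the setup is correct: the formula $\sum_{y\in X}(x^*y)^a(y^*x)^b=\delta_{a,b}\,|X|\,a!\,(d-1)!/(a+d-1)!$ holds for $(a,b)\in\mathcal{S}*\mathcal{S}$, the deduction $4ks^2=d+2$ is right, and the system you describe is overdetermined enough in principle to pin down $d$. (Minor slip: the scheme is $3$-class, not $4$-class.)

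However, the proof has a genuine gap at exactly the place you flag yourself. You state as a ``key claim'' that the eliminant $\Psi(r,d)=0$ together with the integrality constraints forces $d=2$, and then write that ``the cleanest outcome would be that $\Psi$ factors\ldots''. This is a hope, not an argument: you never produce $\Psi$, never exhibit the factorization or parity argument, and never carry out the integrality analysis that excludes $d\ge 3$. This is precisely the nontrivial part of the theorem. The paper's proof does this work in full: it computes the Krein matrix $B_1^*$, uses the associativity relation $\sum_\alpha q_{ij}^\alpha q_{k\alpha}^l=\sum_\beta q_{ki}^\beta q_{\beta j}^l$ to obtain a quadratic system in the $q_{1,j}^k$, solves it to get two explicit branches, eliminates one by the Krein condition $q_{1,1}^2\ge 0$ (which forces $d=2$), and for the other branch computes the second eigenmatrix, extracts the valency $k_1=(t+1)^3(t^2-3)/(3t+5)$ with $t=\sqrt{d+2}$, and finishes with the divisibility $3t+5\mid 16$, yielding $t=1$, $d=-1$, a contradiction. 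Your moment system should ultimately encode the same algebraic varieties, but until you actually perform the elimination and the arithmetic exclusion, the argument is not a proof.

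For $d=1$, your reduction to classifying $4$-point $2$-distance sets on $S^1$ is fine but heavier than needed. The paper argues directly: the real element $\beta\in D(X)$ forces a pair $x,y$ with $x^*y\in\mathbb{R}$, hence $y=-x$ since both lie on $\Omega(1)$; then any remaining $z\notin\{\pm\sqrt{-1}\,x\}$ would introduce too many angles, so $X=\{\pm x,\pm\sqrt{-1}\,x\}$.
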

\begin{proof}
Let $X$ be a tight $3$-code in $\Omega(1)$ with $D(X)=\{\alpha,\overline{\alpha},\beta\}$. After the unitary operation, we may assume that $1\in X$. Then $X=\{1,\alpha,\overline{\alpha},\beta\}$. Since $\beta$ is a real number, $\beta=-1$. Then $\alpha=\sqrt{-1}$ as desired.  

Let $d$ be an integer at least $2$.
Since $X$ is a tight $\mathcal{S}$-code, $X$ is an $\mathcal{S}*\mathcal{S}$-design by Theorem~\ref{thm:tight}.
Since the degree of $X$ is $3$, $X$ with the binary relations obtained from the angles of $X$ carries a commutative association scheme by Theorem~\ref{thm:S1}.
Then the Gram matrix of $X$ is a scalar multiple of some primitive idempotent of the association scheme, say $\boldsymbol{E}_1$.
And we arrange the ordering of the primitive idempotents so that $\boldsymbol{E}_2=\boldsymbol{E}_1^T$ holds and $\boldsymbol{E}_3$ is a real matrix. 
Then $\hat{1}=2,\hat{2}=1,\hat{3}=3$ hold.

We will determine the Krein matrix $\boldsymbol{B}_1^*$ and the second eigenmatrix $\boldsymbol{Q}$.
We use Lemma~\ref{lem:as} (\ref{it:2}),(\ref{it:3}) 
to obtain $q_{1,0}^0=q_{1,0}^2=q_{1,0}^3=q_{1,1}^0=q_{1,3}^0=0$, $q_{1,0}^1=1$, and $q_{1,2}^0=d$.
By Theorem~\ref{thm:S1},  we may set 
\begin{align*}
\boldsymbol{E}_1&=\frac{1}{|X|}\boldsymbol{H}_{1,0}\boldsymbol{H}_{1,0}^*,\\ 
\boldsymbol{E}_2&=\frac{1}{|X|}\boldsymbol{H}_{0,1}\boldsymbol{H}_{0,1}^*,\\ 
\boldsymbol{E}_3&=\frac{1}{|X|}\boldsymbol{H}_{1,1}\boldsymbol{H}_{1,1}^*. 
\end{align*}

By the recurrence \eqref{eq:rec}, we have that $\boldsymbol{E}_2=\frac{1}{|X|}g_{0,1}\circ(\frac{|X|}{d}\boldsymbol{E}_1)$ and $\boldsymbol{E}_3=\frac{1}{|X|}g_{1,1}\circ(\frac{|X|}{d}\boldsymbol{E}_1)$, 
where $f\circ(\boldsymbol{M})$ denotes the matrix obtained by applying a function $f$ to each entry of a matrix $\boldsymbol{M}$.
By the recurrence \eqref{eq:rec} of the Jacobi polynomial, the Krein parameters $q_{1,2}^1,q_{1,2}^2,q_{1,2}^3$ are the same as the coefficients of the Jacobi polynomials in the product $g_{1,0}(x)g_{0,1}(x)$, namely $q_{1,2}^1=q_{1,2}^2=0$ and $q_{1,2}^3=\frac{d}{d+1}$ holds.
Since $X$ is an $\mathcal{S}*\mathcal{S}$-design and $\mathcal{S}*\mathcal{S}$ contains $(2,1)$,  $q_{1,1}^1=0$ holds by \cite[Corollary~9.3 (ii)]{RSX}.
By Lemma~\ref{lem:as} (\ref{it:4}), we have 
\begin{align}
q_{1,1}^2+q_{1,3}^2&=d,\label{eq:krein1}\\
q_{1,1}^3+q_{1,3}^3&=\tfrac{d^2}{d+1}\label{eq:krein2}.
\end{align}

We have $m_1=\dim(\text{Harm}(1,0))=d$ and $m_3=\dim(\text{Harm}(1,1))=d^2-1$ by \eqref{eq:dim}. 
Substituting the values $m_1$, $m_3$ into the equation in Lemma~\ref{lem:as} (\ref{it:5}) 
for $(i,j,k)=(1,1,3)$,  we have 
\begin{align}
(d^2-1)q_{1,1}^3&=d q_{1,3}^2\label{eq:krein4}.
\end{align}
Using the equation in Lemma~\ref{lem:as} (\ref{it:6}) 
for $(i,j,k,l)=(1,1,2,1)$, we have
\begin{align}
(q_{1,1}^2)^2+\tfrac{d^2-1}{d}q_{1,1}^3q_{1,3}^{2}&=\tfrac{2d^2}{d+1}.\label{eq:krein3}
\end{align}
We solve the equations \eqref{eq:krein1}--\eqref{eq:krein3} to obtain 
\begin{align}\label{eq:55}
 &(q_{1,1}^2,q_{1,1}^3,q_{1,3}^2,q_{1,3}^3)= \notag\\
&\begin{cases} 
(\tfrac{d(d-(d-1)\sqrt{d+2})}{d^2+d-1},\tfrac{d^2(d+1+\sqrt{d+2})}{(d+1)(d^2+d-1)},\tfrac{d(d-1)(d+1+\sqrt{d+2})}{d^2+d-1},\tfrac{d^2(d^2-2-\sqrt{d+2})}{(d+1)(d^2+d-1)}),\\
(\tfrac{d(d+(d-1)\sqrt{d+2})}{d^2+d-1},\tfrac{d^2(d+1-\sqrt{d+2})}{(d+1)(d^2+d-1)},\tfrac{d(d-1)(d+1-\sqrt{d+2})}{d^2+d-1},\tfrac{d^2(d^2-2+\sqrt{d+2})}{(d+1)(d^2+d-1)}).
\end{cases}
\end{align}

First we consider the former case in \eqref{eq:55}.
Since the Krein number $q_{1,1}^2$ is nonnegative by Lemma~\ref{lem:as} (\ref{it:1}), we must have  $d=2$.
In this case the second eigenmatrix $\boldsymbol{Q}$ is given by Lemma~\ref{lem:eigen} as 
\begin{align*}
\boldsymbol{Q}=\begin{pmatrix}
1&2&2&3\\
1&\tfrac{2\sqrt{-1}}{\sqrt{3}}&-\tfrac{2\sqrt{-1}}{\sqrt{3}}&-1 \\
1&-\tfrac{2\sqrt{-1}}{\sqrt{3}}&\tfrac{2\sqrt{-1}}{\sqrt{3}}&-1 \\
1&-2&-2&3
\end{pmatrix}.
\end{align*}
Thus we have that $X$ is a complex $3$-code with $D(X)=\{\pm\sqrt{-1}/\sqrt{3},-1\}$. 

Next, in the latter case in \eqref{eq:55}, we set $t=\sqrt{d+2}$. 
The second eigenmatrix is given by Lemma~\ref{lem:eigen} as
  \begin{align*}
\boldsymbol{Q}=\begin{pmatrix}
1& t^2-2 & t^2-2 & (t^2-3)(t^2-1) \\
1&\tfrac{t^2-2}{t+1}&\tfrac{t^2-2}{t+1}& 1-2t+\tfrac{2}{t+1} \\
1& \tfrac{(t^2-2)(t^2+t-1+t\sqrt{-3t^2-2t+5})}{2(t^3-2t+1)} & \tfrac{-6 - 3 t + 3 t^2 + 2 t^3-t\sqrt{-3t^2-2t+5}}{4(t^2-1)(t^2+t-1)} & \tfrac{(t+1)(t^2-3)}{t^2+t-1} \\
1& \tfrac{-6 - 3 t + 3 t^2 + 2 t^3-t\sqrt{-3t^2-2t+5}}{4(t^2-1)(t^2+t-1)} & \tfrac{(t^2-2)(t^2+t-1+t\sqrt{-3t^2-2t+5})}{2(t^3-2t+1)} & \tfrac{(t+1)(t^2-3)}{t^2+t-1} 
\end{pmatrix}.
\end{align*}
Then the valency corresponding to the second row of the second eigenmatrix is determined as $k_1=\frac{(t+1)^3(t^2-3)}{3t+5}$ by $\boldsymbol{P}=\frac{1}{|X|}\boldsymbol{Q}^{-1}$.
By substituting $t=\sqrt{d+2}$, we find that the valency $k_1$ is equal to $\frac{(d-1)(3d^2+6d-5+4(d-1)\sqrt{d+2})}{9d-7}$, which implies that $t=\sqrt{d+2}$ must be an integer.
The partial fraction decomposition $243 k_1=81t^4+108t^3-180t^2-348t-149+\frac{16}{3t+5}$ shows that $3t+5$ divides $16$.
Since $t$ is positive, we have $t=1$ and thus $d=-1$.
This contradicts to the fact that $d$ is positive.
\end{proof}
For $d=1,2$, the tight 3-code is unique, that is proved in Section~\ref{sec:algo}. 
The tight $3$-code in $\Omega(1)$ is $X=\{\pm 1,\pm \sqrt{-1}\}$.  
The tight $3$-code in $\Omega(2)$ is $\{\pm x_1,\pm x_2,\pm x_3,\pm x_4\}$, where $x_1=(1,0)$, $x_2=1/\sqrt{6}(\sqrt{-2},1+\sqrt{-3})$, $x_3=1/\sqrt{6}(\sqrt{-2},1-\sqrt{-3})$, $x_4=1/\sqrt{6}(\sqrt{-2},-2)$.
%

\begin{remark}
For $\mathcal{S}=\{(0,0),(1,0),(0,1),(1,1)\}$, 
the tight $\mathcal{S}$-codes with degree $4$ were given in \cite[Example 10.2]{RSX}.
They are obtained from the subconstituents of SIC-POVMs in dimension $d=2,8$. 
SIC-POVMs are the tight projective $1$-codes, see \cite{RBSC04} more details.
\end{remark}

\end{document}